\documentclass[a4paper,11pt,authoryear]{elsarticle}

\usepackage{amssymb}
\usepackage{amsmath}
\usepackage{amsthm}

\usepackage{multirow}
\usepackage{algorithm}
\usepackage{algpseudocode}
\usepackage{booktabs}
\usepackage{setspace}
\usepackage{threeparttable}
\usepackage{xurl} 
\usepackage[hidelinks]{hyperref}
\hypersetup{breaklinks=true}
\newtheorem{theorem}{Theorem}
\newtheorem{lemma}{Lemma}

\newtheorem{definition}{Definition}
\newdefinition{rmk}{Remark}
\newproof{pf}{Proof}
\newproof{pot}{Proof of Theorem \ref{thm}}

\journal{European Journal of Operational Research}

\begin{document}

\begin{frontmatter}



\author[1]{Junhao Wu}
\ead{junnhao_wu@163.com}

\author[2]{Shaoze Li}
\ead{shaoze_li@163.com}

\author[1]{Cheng Lu \corref{cor}}
\cortext[cor]{Corresponding author.}
\ead{lucheng1983@163.com}

\author[3]{Zhibin Deng}
\ead{zhibindeng@ucas.edu.cn}

\author[4]{Shu-Cherng Fang}
\ead{fang@ncsu.edu}

\title{Variable Aggregation-based Perspective Reformulation for Mixed-Integer Convex Optimization with Symmetry} 



\affiliation[1]{organization={School of Economics and Management, North China Electric Power University},
	city={Beijing},
	postcode={102206},
	country={China}}
	
\affiliation[2]{organization={Thayer School of Engineering, Dartmouth College},
	city={Hanover},
	postcode={NH 03755},
	country={USA}}

\affiliation[3]{organization={School of Economics and Management, University of Chinese Academy of Sciences},
	city={Beijing},
	postcode={100190},
	country={China}}

\affiliation[4]{organization={Department of Industrial and Systems Engineering, North Carolina State University},
	city={Raleigh},
	postcode={NC 27695-7906},
	country={USA}}
	
\begin{abstract}
This paper addresses the challenging issue of symmetry in mixed-integer convex optimization problems, which frequently arise in real-world applications such as the unit commitment problem. Although variable aggregation techniques have been employed to mitigate symmetry, their impact on tightening the corresponding continuous relaxation has not been thoroughly investigated. In this work, we propose a new formulation that integrates the perspective reformulation method into the variable aggregation framework, yielding a tighter continuous relaxation for mixed-integer convex optimization problems with symmetric structures. We prove that, in the presence of symmetry, the convex hull of the feasible region associated with each set of aggregated variables can be exactly characterized. These results demonstrate the effectiveness of the proposed reformulation and establish new theoretical foundations for achieving tightness in variable aggregation-based mixed-integer programming formulations.
\end{abstract}

%



\begin{keyword}
    global optimization
\sep mixed-integer optimization
\sep perspective reformulation
\sep symmetry breaking
\sep unit commitment problem
\end{keyword}
\end{frontmatter}



\section{Introduction}\label{sec1}

Decision-making problems in real-life applications are often modeled as mixed-integer convex optimization problems. These problems require careful formulation to strike a balance between model accuracy and computational solvability. One significant challenge in developing an optimization model is managing the inherent symmetry of the problem \citep{Ostrowski2009,Sherali}. Symmetry in optimization can be defined by a bijection $\gamma:\mathbb{R}^n\rightarrow\mathbb{R}^n$, which maps a solution $\mathbf{x}$ to another solution $\gamma(\mathbf{x})$, while preserving both the objective value and feasibility. Symmetry can reduce the efficiency of branch-and-cut algorithms, as many symmetric solutions are redundantly enumerated, increasing computational effort \citep{clautiaux2025last}.

To address this issue, a variety of techniques have been proposed to break symmetry in mixed-integer programming (MIP) problems. These methods range from straightforward strategies, such as identifying a representative solution among symmetric alternatives and adding cuts to exclude non-representative solutions while keeping the representative solution feasible \citep{Anjos2017,Bendotti2021,Danielson,Friedman2007,Hojny2018,Hojny2020,Liberti2012,Liberti2011,Liberti2014}, to more advanced approaches like isomorphism pruning \citep{Margot,Margot2003,Margot2010} and orbital branching \citep{Ostrowski2011,Ostrowski2015}, which restrict the branching process to representative solutions only. Another approach to symmetry breaking involves problem reformulation, such as orbital shrinking \citep{Fischetti2012,Fischetti2017}, which projects solutions into a subspace known as the ``fixed space'', mapping each group of symmetric solutions to a common representative in the projected space. Variable aggregation-based reformulation, as an extension of orbital shrinking, has been proposed to solve the unit commitment problem \citep{Knueven2017}, demonstrating the utility of problem reformulation techniques on handling symmetry.


In addition to symmetry, another key challenge is deriving a tight continuous relaxation for mixed-integer programming problems, particularly those involving binary variables. One promising technique to improve the continuous relaxation is the perspective reformulation \citep{Gunluk2010}, which has been successfully applied to various problems involving binary variables \citep{Castro2014,Hijazi2012,zheng2021perspective}, including the unit commitment problem \citep{Bacci2023,Frangioni2009a}. Recent work by \cite{Bacci2023} introduced a perspective reformulation-based MIP formulation for the unit commitment problem, demonstrating that the continuous relaxation of this formulation is tight, meaning it exactly represents the convex hull of the feasible solutions for the single-unit commitment problem.

Building upon these advancements, this paper explores the integration of the perspective reformulation technique with variable aggregation to create a new formulation for a class of mixed-integer convex optimization problems. This hybrid approach aims to break symmetry while ensuring a tight continuous relaxation. Specifically, we propose a variable aggregation-based reformulation that incorporates perspective reformulation for a class of mixed-integer convex programming problems. We demonstrate that the continuous relaxation of this new formulation tightly represents the convex hull of the feasible solutions for each aggregated variable set.

The main contributions of this paper are as follows:

\begin{itemize}
	\item[$\bullet$] We propose a perspective reformulation that integrates the variable aggregation technique to handle symmetry while maintaining a tight continuous relaxation.
	\item[$\bullet$] We show that under some mild conditions, the convex hull of feasible solutions for aggregated variables can be exactly captured by the continuous relaxation of the proposed formulation.
	\item[$\bullet$] We establish new theoretical results that provide a foundation for tightness in variable aggregation-based perspective reformulation of a mixed-integer convex optimization problem.
\end{itemize}

The structure of the paper is as follows: Section \ref{sec2} introduces key concepts and problem formulations. Section \ref{sec3} presents the proposed variable aggregation-based perspective reformulation. Section \ref{sec4} analyzes the tightness of the continuous relaxation. Sections \ref{sec5} and \ref{sec6} apply the theoretical results to the unit commitment problem and general mixed-integer convex separable programming problems, respectively. Section \ref{sec7} presents computational results to validate our theoretical findings,
and Section \ref{sec8} concludes the paper.

In the remaining parts of this paper, we always use bold letters to represent matrices and vectors. For a vector $\mathbf{x}$, we use $x^j$ to represent its $j$-th entry. For a set of vectors $\mathbf{x}_t$ indexed by $t$, we use $x_t^j$ to represent the $j$-th entry of $\mathbf{x}_t$. To avoid confusion, we always use superscripts to index the entry of a vector or a matrix, and use subscripts to index a sequence of vectors or matrices. We use $\mathbb{R}_{+}$ to denote the set of nonnegative real numbers, and $\mathbb{Z}_{+}$ the set of nonnegative integers. For a set of vectors $\mathbf{x}_s~(s\in\mathcal{K})$, we use $\mathbf{x}:=\{\mathbf{x}_s\}_{s\in\mathcal{K}}$ to represent that $\mathbf{x}$ is defined by concatenating the vectors in $\{\mathbf{x}_s\}_{s\in\mathcal{K}}$. We use $\textrm{conv}(\cdot)$ to represent the convex hull of a set.
\section{Problem formulation}\label{sec2}

In this section, we introduce a general class of mixed-integer convex optimization problems. For easy description, we introduce three definitions first, which will be used throughout the paper.

\begin{definition}\label{def1}
	Let $p$ be a nonnegative integer and $\mathcal{F}\subseteq\mathbb{R}^d$ be a nonempty set, then
	$$p\otimes \mathcal{F}:=\left\{\sum_{i=1}^p\mathbf{x}_{i}\mid \mathbf{x}_{1},\ldots,\mathbf{x}_p\in \mathcal{F}\right\}$$ if $p>0$, and $p\otimes \mathcal{F}:=\{\mathbf{0}\}$ if $p=0$.
\end{definition}

\begin{definition}\label{def2}
	Let $r \in \mathbb{R}_{+}$ be a nonnegative real number and $\mathcal{F}\subseteq\mathbb{R}^d$ be a nonempty set, then $$r \odot \mathcal{F}:=\{r\mathbf{x}\mid \mathbf{x} \in \mathcal{F}\}$$ if $r>0$, and $r\odot \mathcal{F}:=\{\mathbf{0}\}$ if $r=0$.
\end{definition}

It is straightforward to check that ({\romannumeral1}) $(r_1r_2)\odot \mathcal{F}=r_1\odot(r_2\odot \mathcal{F})$ for any $r_1,r_2 \in \mathbb{R}_{+}$; and ({\romannumeral2}) the set $\{ (\mathbf{x},r)\mid r \in \mathbb{R}_{+} ~and~ \mathbf{x}\in r\odot \mathcal{F}\}$ is a closed set in $\mathbb{R}^{d+1}$ when $\mathcal{F}$ is a bounded and closed subset of $\mathbb{R}^{d}$.

\begin{definition}\label{def3}
	Let $f(\cdot)$ be a real-valued closed convex function with a closed convex effective domain $\mathcal{F}\subseteq\mathbb{R}^n$,  then its perspective function is
	\begin{equation}
		\begin{aligned}
			F(\mathbf{x},t)=\left\{\begin{array}{@{}lll}
				t f(\mathbf{x}/t),~&\textrm{if}~t>0\textrm{ and }\mathbf{x}/t\in\mathcal{F},\\
				0,~&\textrm{if}~t=0\textrm{ and }\mathbf{x}=\mathbf{0},\\
				+\infty,~&\textrm{otherwise}.
			\end{array}\right.
		\end{aligned}
	\end{equation}
\end{definition}
From \citep[Theorem 8.2 and Corollary 8.5.1]{Rockafellar}, we know that the closure of the perspective function $F(\mathbf{x},t)$ is
\begin{equation}
	\begin{aligned}
		\textrm{cl}F(\mathbf{x},t)=\left\{\begin{array}{@{}lll}
			t f(\mathbf{x}/t),~&\textrm{if}~t>0\textrm{ and }\mathbf{x}/t\in\mathcal{F}, \\
			f^{\infty}(\mathbf{x}),~&\textrm{if}~t=0\textrm{ and }\mathbf{x}\in \mathcal{F}^\infty, \\
			+\infty,~&\textrm{otherwise},
		\end{array}\right.
	\end{aligned}
\end{equation}
where $f^{\infty}(\mathbf{x})$ denotes the recession function of $f(\mathbf{x})$ and $\mathcal{F}^\infty$ denotes the recession cone of $\mathcal{F}$. One may refer to \citep[Section 8]{Rockafellar} for detailed discussions on the definitions and properties of the recession function and the recession cone. We remark that $f^{\infty}(\mathbf{0})=0$. Furthermore, if $\mathcal{F}$ is bounded, then $\mathcal{F}^\infty=\{\mathbf{0}\}$ and $\textrm{cl}F(\mathbf{x},t)=F(\mathbf{x},t)$.

We now introduce our problem formulation. Let $\mathcal{I}$ and $\mathcal{L}$ be finite index sets. For each $i \in \mathcal{I}$, we associate a $d$-dimensional real vector $\mathbf{x}_{i}=(x_{i}^1,x_{i}^2,\ldots,x_{i}^d)^\top\in \mathbb{R}^d$ and a $k$-dimensional binary vector $\mathbf{y}_{i}=(y_{i}^1,y_{i}^2,\ldots,y_{i}^k)^\top \\ \in \{0,1\}^k$.
We consider the following general mixed-integer convex optimization problem:
\begin{equation}\label{P-Main}
	\textrm{(MICOP)}~\left\{
	\begin{aligned}
		\min ~& \sum_{i\in \mathcal{I}}f_i(\mathbf{x}_i,\mathbf{y}_i) \\
		\mbox{s.t.}~&\sum_{i \in \mathcal{I}}g_{il}(\mathbf{x}_i,\mathbf{y}_i)\leq b_l,~l\in\mathcal{L}, \\
		&(\mathbf{x}_i,\mathbf{y}_i)\in\Omega_i,~i\in\mathcal{I},
	\end{aligned}
	\right.
\end{equation}
where for each $i\in\mathcal{I}$ and $l\in\mathcal{L}$, $f_i(\cdot)$ and $g_{il}(\cdot)$ are closed convex functions defined on $\mathbb{R}^{d+k}$, the feasible set $\Omega_i\subseteq \mathbb{R}^{d+k}$ is a set with convex constraints and binary constraints on the components of $\mathbf{y}_i$.

Furthermore, we specify the structure of the feasible set $\Omega_i$. In the class of problems under consideration, $\Omega_i$ is assumed to be bounded and has the following closed-form representation:
\begin{equation}\label{eq4}
	\Omega_i=\left\{ (\mathbf{x},\mathbf{y})
	\in \mathbb{R}^{d+k} 
	\left|\,\begin{aligned}
		&h_{il}(\mathbf{x}_s,y^s)\leq d_{l},~l\in\mathcal{H}_{is},~s\in\mathcal{K}\\
		&\mathbf{x}=\{\mathbf{x}_s\}_{s\in\mathcal{K}},~\mathbf{A}_i\mathbf{y} \leq \mathbf{b}_i,~\mathbf{y}\in \{0,1\}^k
	\end{aligned}\right.\right\},
\end{equation}
where $\mathcal{K}:=\{1,\ldots,k\}$, the vector $\mathbf{x}$ is partitioned into subvectors $\{\mathbf{x}_s\}_{s\in\mathcal{K}}$, for each $s\in\mathcal{K}$ there is a finite index set $\mathcal{H}_{is}$, such that for any $l\in \mathcal{H}_{is}$, $h_{il}(\cdot)$ is a closed convex function defined on variables $(\mathbf{x}_s,y^s)$, $\mathbf{A}_i$ is a totally unimodular matrix, and $\mathbf{b}_i$ is an integer vector. Without loss of generality, we always assume that 
$\{\mathbf{y}\in \mathbb{R}^{k}\,\mid\,\mathbf{A}_i\mathbf{y}\leq \mathbf{b}_i\}\subseteq[0,1]^k.$ 
We caution the reader that for the definition of $\Omega_i$, the generic variables $(\mathbf{x}, \mathbf{y})$ and their components $(\mathbf{x}_s, y^s)$ are used interchangeably with the indexed versions $(\mathbf{x}_i, \mathbf{y}_i)$ and $(\mathbf{x}_{is}, y_i^s)$ throughout this paper.

At the first glance, problem (MICOP) is very special. Actually, the formulation (MICOP) is initially motivated from the dynamic programming-based formulation of the unit commitment problem proposed in \citep{Bacci2023}. We intended to integrating the perspective reformulation method into the variable aggregation framework for the unit commitment problem. Subsequently, we found that the derived results can be generalized to some other problems such as the linear cover problem \citep{Agnetis2009,Agnetis2012}, and some general separable mixed-integer convex optimization problems \citep{Bretthauer2002,Edirisinghe2019,Frangioni2011,Schoot2022}. Thus, (MICOP) can be seen as a generalization of the dynamic programming-based formulation of the unit commitment problem proposed in \citep{Bacci2023}, while provides a unified formulation for the aforementioned problems.

For (MICOP), we say that the two indices $i$ and $j$ in $\mathcal{I}$ are equivalent if they share the same parameters in the objective function and constraints (in other word, $\Omega_i=\Omega_j$, $f_{i}=f_j$, and $g_{il}=g_{jl}$ for all $l\in\mathcal{L}$). We partition the set $\mathcal{I}$ into $T$ mutually disjoint subsets, i.e., $\mathcal{I}=\bigcup_{t=1}^T \mathcal{I}_{t}$, where $T$ represents the number of different equivalent classes and each set $\mathcal{I}_t$ ($t=1,\ldots,T$) represents a set of indices (machines) in the same equivalent class. Let $\mathcal{T}:=\{1,\ldots,T\}$. For each equivalent class $t\in\mathcal{T}$, denote the identical sets, objective functions and constraint functions, shared by all $i\in\mathcal{I}_t$, as $\Omega_t$, $f_{t}(\cdot)$ and $g_{tl}(\cdot)$ for $l\in\mathcal{L}$, respectively, where $\Omega_t$ is defined as 
follows:
\begin{equation}\label{eq5}
	\Omega_t=\left\{ (\mathbf{x},\mathbf{y})
	\in \mathbb{R}^{d+k} 
	\left|\,\begin{aligned}
		&h_{tl}(\mathbf{x}_s,y^s)\leq d_l,~l\in\mathcal{H}_{ts},~s\in\mathcal{K}\\
		&\mathbf{x}=\{\mathbf{x}_s\}_{s\in\mathcal{K}},~\mathbf{A}_{t}\mathbf{y} \leq \mathbf{b}_t,~\mathbf{y}\in \{0,1\}^k
	\end{aligned}\right.\right\}.
\end{equation}
With the above notations, (MICOP) can be reformulated as follows:
\begin{equation}\label{P2}
	\textrm{(SMICOP)}~\left\{
	\begin{aligned}
		\min ~& \sum_{t\in\mathcal{T}}\sum_{i\in \mathcal{I}_{t}}f_t(\mathbf{x}_i,\mathbf{y}_i) \\
		\mbox{s.t.}~&\sum_{t\in\mathcal{T}}\sum_{i \in \mathcal{I}_{t}}g_{tl}(\mathbf{x}_i,\mathbf{y}_i)\leq b_l,~l\in\mathcal{L}, \\
		&(\mathbf{x}_i,\mathbf{y}_i)\in \Omega_t,~i\in \mathcal{I}_t,~t\in\mathcal{T}.
	\end{aligned}
	\right.
\end{equation}
The main objective of this paper is to handle the symmetry lies within (SMICOP).

\section{Integration of perspective reformulation with variable aggregation}\label{sec3}
In this section, by incorporating the method of perspective reformulation, we derive a variable aggregation technique to handle the symmetry lies within (SMICOP). Without loss of generality, we assume that for all $t\in \mathcal{T}$ and $l\in\mathcal{L}$, the functions $f_t(\cdot)$ and $g_{tl}(\cdot)$ are linear functions. In the remaining parts of this paper, we will focus on the following problem:
\begin{equation}
	\textrm{(P0)}~\left\{
	\begin{aligned}
		\min ~& \sum_{t\in\mathcal{T}}\sum_{i\in \mathcal{I}_{t}}\mathbf{c}_t^\top\mathbf{x}_i \\
		\mbox{s.t.}~&\sum_{t\in\mathcal{T}}\sum_{i \in \mathcal{I}_{t}}\mathbf{a}_{tl}^\top \mathbf{x}_i\leq b_l,~l\in\mathcal{L}, \\
		&(\mathbf{x}_i,\mathbf{y}_i)\in \Omega_t,~i\in \mathcal{I}_t,~t\in\mathcal{T},
	\end{aligned}
	\right.
\end{equation}
where $\mathbf{c}_t\in\mathbb{R}^d$ and $\mathbf{a}_{tl}\in\mathbb{R}^d$ for each $t\in\mathcal{T}$ and $l\in\mathcal{L}$.
In (P0), all the nonlinear constraints and integral constraints are packed into the constraint $(\mathbf{x}_i,\mathbf{y}_i)\in\Omega_t$. The objective function and the remaining constraints are all linear with respective to the continuous variables $\mathbf{x}_i$ ($i\in\mathcal{I}$). Actually, (P0) includes (SMICOP) as a special case. A proof for deriving (SMICOP) as a special case of (P0) is presented in Appendix A of Supplementary Materials.


To handle symmetry in problem (P0), we define the aggregated variables for each $t\in \mathcal{T}$, namely,
\begin{equation*}
	\begin{aligned}
		\mathbf{X}_{t}=\sum_{i \in \mathcal{I}_t}\mathbf{x}_i,~\mathbf{Y}_{t}=\sum_{i \in \mathcal{I}_t}\mathbf{y}_i.
	\end{aligned}
\end{equation*}
Using the aggregated variables, (P0) is reformulated as follows:
\begin{equation}\label{P4}
	\textrm{(P1)}~\left\{
	\begin{aligned}
		\min ~& \sum_{t\in\mathcal{T}}\mathbf{c}_t^\top\mathbf{X}_t \\
		\mbox{s.t.}~&\sum_{t\in\mathcal{T}}\mathbf{a}_{tl}^\top\mathbf{X}_t\leq b_l,~l\in\mathcal{L}, \\
		&(\mathbf{X}_t,\mathbf{Y}_t)\in N_t\otimes \Omega_t,~t\in\mathcal{T},
	\end{aligned}
	\right.
\end{equation}
where $N_t=|\mathcal{I}_t|$. Actually, (P1) is an abstract formulation rather than a detailed formulation. 
As discussed in \citep{Knueven2017}, if (P0) is a mixed-integer linear optimization problem, where the functions $h_{tl}(\cdot)~ (\forall l \in \mathcal{H}_{ts},~ s \in \mathcal{K})$ are all linear functions defined on the variables $(\mathbf{x}_s, y^s)$, a detailed formulation of (P1) can be derived directly using variable aggregation techniques to break the symmetry in problem (P0). However, for the general mixed-integer convex optimization problems considered in this work, particularly those with feasible sets of the form \eqref{eq4}, existing variable aggregation techniques are insufficient to derive the closed-form expressions for $N_t\otimes \Omega_t$. As a result, a detailed formulation of (P1) cannot be obtained to break the symmetry in (P0). Addressing this challenge is a central contribution of this paper. In the remaining parts of this section, we discuss how to derive closed-form expressions for $(\mathbf{X}_t,\mathbf{Y}_t)\in N_t\otimes \Omega_t$. Before deriving the closed-form expressions for (P1), 
we first present some general theoretical results for a general nonempty convex set $\mathcal{F}\subseteq\mathbb{R}^{d}$.

\begin{lemma}\label{lem1}
	If $\mathcal{F}\subseteq\mathbb{R}^{d}$ is a nonempty convex set and $r\in\mathbb{Z}_+$, then $r\otimes \mathcal{F}=r\odot \mathcal{F}$.
\end{lemma}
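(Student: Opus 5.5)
The plan is to prove the two inclusions $r\otimes\mathcal{F}\subseteq r\odot\mathcal{F}$ and $r\odot\mathcal{F}\subseteq r\otimes\mathcal{F}$ directly from Definitions \ref{def1} and \ref{def2}, after disposing of the degenerate case. If $r=0$, both sets equal $\{\mathbf{0}\}$ by definition, so there is nothing to prove. From now on assume $r\geq 1$ is an integer.

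For the inclusion $r\odot\mathcal{F}\subseteq r\otimes\mathcal{F}$, take any element $r\mathbf{x}$ with $\mathbf{x}\in\mathcal{F}$. Choose $\mathbf{x}_1=\cdots=\mathbf{x}_r=\mathbf{x}$. Then $\sum_{i=1}^r\mathbf{x}_i=r\mathbf{x}$, so $r\mathbf{x}\in r\otimes\mathcal{F}$. This direction needs neither convexity nor integrality beyond $r\ge 1$.

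For the reverse inclusion $r\otimes\mathcal{F}\subseteq r\odot\mathcal{F}$, take $\mathbf{z}=\sum_{i=1}^r\mathbf{x}_i$ with each $\mathbf{x}_i\in\mathcal{F}$, and write
\[
\mathbf{z}=r\left(\frac{1}{r}\sum_{i=1}^r\mathbf{x}_i\right).
\]
The bracketed point is a convex combination of points of $\mathcal{F}$, with equal weights $1/r$ summing to one. Since $\mathcal{F}$ is convex, this point lies in $\mathcal{F}$; this is a finite-combination statement that follows from the two-point definition by a routine induction. Hence $\mathbf{z}\in r\odot\mathcal{F}$.

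This is the only step that uses convexity, and it is the only place any care is needed. Without convexity the inclusion fails; for example, $\mathcal{F}=\{0,1\}$ and $r=2$ give $r\otimes\mathcal{F}=\{0,1,2\}$ but $r\odot\mathcal{F}=\{0,2\}$. Combining the two inclusions yields $r\otimes\mathcal{F}=r\odot\mathcal{F}$.
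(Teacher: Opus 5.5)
Your proof is correct and takes essentially the same route as the paper. The $r=0$ case is immediate from the definitions, $r\odot\mathcal{F}\subseteq r\otimes\mathcal{F}$ follows by repeating the same point $r$ times, and the reverse inclusion uses convexity to place the average $\frac{1}{r}\sum_{i=1}^r\mathbf{x}_i$ in $\mathcal{F}$.
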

\begin{proof}
Please see Appendix B in Supplementary Materials for the proof.
\end{proof}

Now, we further assume that the set $\mathcal{F}$ is a nonempty bounded closed convex set defined as
\begin{equation}\label{eq8}
	\mathcal{F}=\{\mathbf{x}\in \mathbb{R}^{d}\,\mid\,h_{l}(\mathbf{x})\leq d_l,~l\in\mathcal{H}\},
\end{equation}
where $\mathcal{H}$ denotes an index set and, for each $l\in\mathcal{H}$, $d_l\in \mathbb{R}$ and the function $h_l(\cdot)$ is closed and convex. We then have the next result.

\begin{theorem}\label{thm1}
	Let $\mathcal{F}$ be a nonempty bounded closed convex set defined as in \eqref{eq8} and $r\in\mathbb{R}_+$, then the set $r\odot \mathcal{F}$ can be represented as
	\begin{equation}\label{eq9}
		r \odot \mathcal{F}=\{\mathbf{X}\in\mathbb{R}^d\,\mid\,H_l(\mathbf{X},r)\leq rd_l,~l\in\mathcal{H}\},
	\end{equation}
	where $H_l(\cdot,\cdot)$ denotes the perspective function of $h_l(\cdot)$ for $l\in\mathcal{H}$.
\end{theorem}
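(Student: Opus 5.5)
The plan is to prove the set equality by splitting into the cases $r>0$ and $r=0$ and, in each, using the explicit formula for the perspective function in Definition \ref{def3}.

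\textbf{Case $r>0$.} By Definition \ref{def2}, $\mathbf{X}\in r\odot\mathcal{F}$ if and only if $\mathbf{X}=r\mathbf{x}$ for some $\mathbf{x}\in\mathcal{F}$, that is, if and only if $\mathbf{X}/r\in\mathcal{F}$.

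For the inclusion ``$\subseteq$'', take $\mathbf{X}/r\in\mathcal{F}$. Then for each $l\in\mathcal{H}$ we have $h_l(\mathbf{X}/r)\le d_l<+\infty$, so $\mathbf{X}/r$ lies in the effective domain of $h_l$. The first branch of Definition \ref{def3} then gives $H_l(\mathbf{X},r)=r\,h_l(\mathbf{X}/r)$. Multiplying $h_l(\mathbf{X}/r)\le d_l$ by $r>0$ yields $H_l(\mathbf{X},r)\le rd_l$.

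For the inclusion ``$\supseteq$'', suppose $H_l(\mathbf{X},r)\le rd_l$ for all $l$.
\begin{itemize}
\item Since the right-hand side is finite, $H_l(\mathbf{X},r)<+\infty$.
\item With $r>0$, finiteness forces the first branch of Definition \ref{def3}, so $\mathbf{X}/r\in\operatorname{dom}h_l$ and $H_l(\mathbf{X},r)=r\,h_l(\mathbf{X}/r)$.
\item Dividing by $r$ gives $h_l(\mathbf{X}/r)\le d_l$ for every $l$.
\end{itemize}
Hence $\mathbf{X}/r\in\mathcal{F}$, and so $\mathbf{X}\in r\odot\mathcal{F}$.

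\textbf{Case $r=0$.} Here $r\odot\mathcal{F}=\{\mathbf{0}\}$ by Definition \ref{def2}, and the constraint reads $H_l(\mathbf{X},0)\le 0$. This case is the delicate one, because the value of the perspective at $t=0$ matters.

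If $H_l$ is taken literally as in Definition \ref{def3}, then $H_l(\mathbf{X},0)$ is finite only at $\mathbf{X}=\mathbf{0}$, where it equals $0\le 0$. So the right-hand set is exactly $\{\mathbf{0}\}$, as required. This needs $\mathcal{H}\neq\emptyset$; if $\mathcal{H}=\emptyset$, boundedness of the nonempty set $\mathcal{F}=\mathbb{R}^d$ fails, so that case cannot occur.

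One may instead read $H_l$ as its closure $\mathrm{cl}\,H_l$, which is natural since a solver works with the closed version. In that case the zero-branch is $h_l^\infty(\mathbf{X})$ on $\operatorname{dom}(h_l)^\infty$, and we need the following fact: if $h_l^\infty(\mathbf{X})\le 0$ for all $l$, then $\mathbf{X}=\mathbf{0}$.

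\textbf{Main obstacle.} Proving that fact is the main difficulty. The approach is to use \citep[Theorem 8.7]{Rockafellar}: the recession cone of the nonempty level set $\{\mathbf{x}: h_l(\mathbf{x})\le d_l\}$ equals $\{\mathbf{z}: h_l^\infty(\mathbf{z})\le 0\}$. Since the recession cone of an intersection of closed convex sets with nonempty intersection is the intersection of their recession cones, we get
\[
\mathcal{F}^\infty=\bigcap_{l\in\mathcal{H}}\{\mathbf{z}: h_l^\infty(\mathbf{z})\le 0\}.
\]
Boundedness of $\mathcal{F}$ gives $\mathcal{F}^\infty=\{\mathbf{0}\}$, so $\mathbf{X}=\mathbf{0}$. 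Conversely, $h_l^\infty(\mathbf{0})=0\le 0$, so $\mathbf{0}$ does satisfy the constraints.

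Either reading therefore yields $\{\mathbf{0}\}$, which completes the proof of \eqref{eq9}.
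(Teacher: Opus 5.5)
Your proof is correct and is essentially the direct argument this statement calls for, which is almost certainly the one in the paper's appendix (that proof is not reproduced in the text above). You split on $r$: for $r>0$, $\mathbf{X}\in r\odot\mathcal{F}$ iff $\mathbf{X}/r\in\mathcal{F}$ iff $H_l(\mathbf{X},r)=r\,h_l(\mathbf{X}/r)\le rd_l$ for all $l$; for $r=0$, both sides reduce to $\{\mathbf{0}\}$ by Definitions \ref{def2} and \ref{def3}. Your extra check under the closed-perspective reading, via Rockafellar's Theorem~8.7 and $\mathcal{F}^\infty=\{\mathbf{0}\}$, is also sound, but the statement as defined does not need it.
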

\begin{proof}
	Please see Appendix C in Supplementary Materials for the proof.
\end{proof}

Returning to (P1), we consider the closed-form expressions of the constraints $(\mathbf{X}_t,\mathbf{Y}_t)\in N_t\otimes \Omega_t$ for $t\in\mathcal{T}$. 
For simplicity of notations, we temporarily ignore the subscript $t$ of $\Omega_t$ and consider the set $\Omega\in \mathbb{R}^{d+k}$ defined by the following general formulation:
\begin{equation}\label{eq12}
	\Omega=\left\{ (\mathbf{x},\mathbf{y})
	\in \mathbb{R}^{d+k} 
	\left|\,\begin{aligned}
		&h_{l}(\mathbf{x}_s,y^s)\leq d_l,~l\in\mathcal{H}_{s},~s\in\mathcal{K}\\
		&\mathbf{x}=\{\mathbf{x}_s\}_{s\in\mathcal{K}},~\mathbf{A}\mathbf{y} \leq \mathbf{b},~\mathbf{y}\in \{0,1\}^k
	\end{aligned}\right.\right\}.
\end{equation}
where for each $s\in\mathcal{K}$ and $l\in\mathcal{H}_s$, the function $h_l(\cdot)$ is closed and convex, and $d_l\in \mathbb{R}$.
For each $l\in\mathcal{H}_s$, let $\hat{h}_l(\mathbf{x}_s):=h_l(\mathbf{x}_s,1)$ and $\bar{h}_l(\mathbf{x}_s):=h_l(\mathbf{x}_s,0)$. Note that $\hat{h}_l(\cdot)$ and $\bar{h}_l(\cdot)$ are all convex, since $h_l(\cdot,\cdot)$ is convex. We also define the following two convex sets:
\begin{equation}
	\begin{aligned}
		\Lambda_s=\{ \mathbf{x}_s\,|\, \hat{h}_l(\mathbf{x}_s)\leq d_l,~l\in\mathcal{H}_s\},
	~\textrm{and}~
		\Gamma_s=\{ \mathbf{x}_s\,|\, \bar{h}_l(\mathbf{x}_s)\leq d_l,~l\in\mathcal{H}_s\}.
	\end{aligned}
\end{equation}
Using these notations, the set $\Omega$ can be reformulated as
\begin{equation}\label{eq14}
	\Omega =\left\{ (\mathbf{x},\mathbf{y})\left|\begin{aligned}
		&\mathbf{x}_s\in y^s\otimes \Lambda_s+(1-y^s)\otimes \Gamma_s, s\in \mathcal{K}\\
		&\mathbf{x}=\{\mathbf{x}_s\}_{s\in\mathcal{K}},~\mathbf{A}\mathbf{y}\leq \mathbf{b}, ~\mathbf{y}\in \{0,1\}^k
	\end{aligned}\right.\right\}.
\end{equation}
For each $l\in\mathcal{H}_s$, let $\hat{H}_{l}$ and $\bar{H}_{l}$ be the perspective function of $\hat{h}_{l}$ and $\bar{h}_{l}$, respectively. Then, following (13), Lemma \ref{lem1} and Theorem \ref{thm1}, the set $\Omega$ can be reformulated as 

\begin{equation}\label{eq15}
	\Omega^{\textrm{per}}:=\left\{(\mathbf{x},\mathbf{y})\left|\begin{aligned}
		&\hat{H}_{l}(\mathbf{w}_s,y^s)\leq y^s d_l, ~ l\in \mathcal{H}_s,~s\in\mathcal{K}\\
		&\bar{H}_{l}(\mathbf{z}_s,1-y^s)\leq (1-y^s)d_l, ~ l\in \mathcal{H}_s,~s\in\mathcal{K}\\
		&\mathbf{x}_s=\mathbf{w}_s+\mathbf{z}_s,~s\in\mathcal{K}\\
		&\mathbf{x}=\{\mathbf{x}_s\}_{s\in\mathcal{K}},~\mathbf{A}\mathbf{y}\leq \mathbf{b}, ~\mathbf{y}\in \{0,1\}^k
	\end{aligned}\right.\right\}.
\end{equation}
The formulation of $\Omega^{\textrm{per}}$ can be seen as the perspective reformulation of $\Omega$ with the formulation given in \eqref{eq12}. 

Then, we have the next result.
\begin{theorem}\label{thm2}
	Let $\Omega$ be a nonempty set formulated as in \eqref{eq14}. Then for any $r\in\mathbb{Z}_+$, we have
	\begin{equation}\label{eq16}
		\begin{aligned}
			r\otimes\Omega=\Omega^{\textrm{agg}}:=\left\{ (\mathbf{X},\mathbf{Y}) \left|\begin{aligned}
				&\mathbf{X}_s\in Y^s\odot \Lambda_s+(r-Y^s)\odot \Gamma_s, ~s \in \mathcal{K}\\
				&\mathbf{X}=\{\mathbf{X}_{s}\}_{s\in \mathcal{K} },~\mathbf{A} \mathbf{Y}\leq r\mathbf{b}, ~ \mathbf{Y}\in \{0,1,\ldots,r\}^{k}
			\end{aligned}\right.\right\}.
		\end{aligned}
	\end{equation}
\end{theorem}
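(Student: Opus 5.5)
We prove the two inclusions $r\otimes\Omega\subseteq\Omega^{\textrm{agg}}$ and $\Omega^{\textrm{agg}}\subseteq r\otimes\Omega$ separately. The case $r=0$ is trivial: both sides equal $\{\mathbf{0}\}$, since $\mathbf{Y}\in\{0\}^k$ forces each $\mathbf{X}_s\in 0\odot\Lambda_s+0\odot\Gamma_s=\{\mathbf{0}\}$. So assume $r\geq 1$.

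\emph{Inclusion $r\otimes\Omega\subseteq\Omega^{\textrm{agg}}$.} Take $(\mathbf{x}_i,\mathbf{y}_i)\in\Omega$ for $i=1,\ldots,r$, and set $\mathbf{X}=\sum_i\mathbf{x}_i$ and $\mathbf{Y}=\sum_i\mathbf{y}_i$.
\begin{itemize}
\item Since each $\mathbf{y}_i$ is binary, $\mathbf{Y}\in\{0,\ldots,r\}^k$.
\item Summing $\mathbf{A}\mathbf{y}_i\le\mathbf{b}$ over $i$ gives $\mathbf{A}\mathbf{Y}\le r\mathbf{b}$.
\item Fix $s\in\mathcal{K}$. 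By \eqref{eq14}, $\mathbf{x}_{is}\in\Lambda_s$ if $y_i^s=1$ and $\mathbf{x}_{is}\in\Gamma_s$ if $y_i^s=0$. Exactly $Y^s$ indices have $y_i^s=1$, so $\mathbf{X}_s\in Y^s\otimes\Lambda_s+(r-Y^s)\otimes\Gamma_s$.
\item $\Lambda_s$ and $\Gamma_s$ are convex sublevel sets of convex functions and are nonempty whenever the corresponding multiplicity is positive. So Lemma~\ref{lem1} turns $\otimes$ into $\odot$, giving the first constraint of $\Omega^{\textrm{agg}}$.
\end{itemize}
Some care is needed when $\Lambda_s$ or $\Gamma_s$ is empty. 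If such a set is empty, every feasible $\mathbf{y}$ has $y^s$ fixed accordingly, and the $\odot$ expression with multiplier $0$ is still $\{\mathbf{0}\}$ by the conventions in Definitions~\ref{def1}--\ref{def2}. The same convention must be read into $\Omega^{\textrm{agg}}$, where a multiplier of $0$ contributes $\{\mathbf{0}\}$.

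\emph{Inclusion $\Omega^{\textrm{agg}}\subseteq r\otimes\Omega$.} This is the main obstacle: we must disaggregate $(\mathbf{X},\mathbf{Y})$ into $r$ feasible copies whose binary parts are consistent across all $s$ at once. The plan has two steps.
\begin{enumerate}
\item \textbf{Decompose $\mathbf{Y}$.} Let $P=\{\mathbf{y}:\mathbf{A}\mathbf{y}\le\mathbf{b}\}$. Since $\mathbf{A}$ is totally unimodular and $\mathbf{b}$ is integral, $P$ has the integer decomposition property. This is the classical result of Baum--Trotter: any integer point of $rP$ is a sum of $r$ integer points of $P$. Applying it, $\mathbf{Y}=\sum_{i=1}^r\mathbf{y}_i$ with $\mathbf{y}_i\in\{0,1\}^k$ and $\mathbf{A}\mathbf{y}_i\le\mathbf{b}$, using the standing assumption $P\subseteq[0,1]^k$. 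If we want a self-contained argument, we would prove it by induction on $r$. Intersect $\{\mathbf{y}\in P,\ \mathbf{Y}-\mathbf{y}\in(r-1)P\}$, which is described by the matrix $\begin{pmatrix}\mathbf{A}\\-\mathbf{A}\end{pmatrix}$. This matrix is TU with integral right-hand side, and the system is nonempty because it contains $\mathbf{Y}/r$. Hence it has an integral vertex $\mathbf{y}_r$, and the induction hypothesis applies to $\mathbf{Y}-\mathbf{y}_r$.
\item \textbf{Decompose $\mathbf{X}_s$.} For each $s$, exactly $Y^s$ of the $\mathbf{y}_i$ have $y_i^s=1$. Write $\mathbf{X}_s=Y^s\mathbf{u}_s+(r-Y^s)\mathbf{v}_s$ with $\mathbf{u}_s\in\Lambda_s$ and $\mathbf{v}_s\in\Gamma_s$, where a term is omitted when its multiplier is $0$. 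Set $\mathbf{x}_{is}=\mathbf{u}_s$ if $y_i^s=1$ and $\mathbf{x}_{is}=\mathbf{v}_s$ otherwise.
\end{enumerate}
Each $(\mathbf{x}_i,\mathbf{y}_i)$ then satisfies \eqref{eq14}, so it lies in $\Omega$. Moreover $\sum_i\mathbf{x}_{is}=\mathbf{X}_s$ for every $s$ and $\sum_i\mathbf{y}_i=\mathbf{Y}$, which completes this inclusion.

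The equality of the two sets is therefore established. The key nontrivial ingredient is the integer decomposition property of TU polyhedra in step~1; the continuous part reduces to Lemma~\ref{lem1}, with Theorem~\ref{thm1} later supplying the perspective-function description of each $\odot$ term.
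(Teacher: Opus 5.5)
Your proof is correct and follows essentially the same route as the paper. The forward inclusion sums $r$ copies and converts $\otimes$ to $\odot$ via Lemma~\ref{lem1}. The reverse inclusion first disaggregates $\mathbf{Y}$ using the integer decomposition property of the TU system $\mathbf{A}\mathbf{y}\le\mathbf{b}$ with integral $\mathbf{b}$ (Baum--Trotter), then writes each $\mathbf{X}_s$ as a sum of identical copies of a point of $\Lambda_s$ and a point of $\Gamma_s$. Your self-contained inductive proof of the decomposition and your handling of zero multipliers and possibly empty $\Lambda_s,\Gamma_s$ are both sound; the inductive polytope has vertices because it lies in $P\subseteq[0,1]^k$.
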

\begin{proof}
	Please see Appendix D in Supplementary Materials for the proof.
\end{proof}

Based on \eqref{eq16}, the closed-form formulation of $\Omega^{\textrm{agg}}$ can be further expressed as
\begin{equation}\label{eq18}
	\begin{aligned}
		\Omega^{\textrm{agg}}=
		\left\{ (\mathbf{X},\mathbf{Y})  \left|\begin{aligned}
			&\hat{H}_{l}(\mathbf{W}_s,Y^s)\leq Y^s d_l, ~ l\in \mathcal{H}_s,~s\in\mathcal{K}\\
			&\bar{H}_{l}(\mathbf{Z}_s,r-Y^s)\leq (r-Y^s)d_l, ~ l\in \mathcal{H}_s,~s\in\mathcal{K}\\
			&\mathbf{X}_s=\mathbf{W}_s+\mathbf{Z}_s,~s\in\mathcal{K}\\
			&\mathbf{X}=\{\mathbf{X}_{s}\}_{s\in \mathcal{K} },~\mathbf{A} \mathbf{Y}\leq r\mathbf{b}, ~ \mathbf{Y}\in \{0,1,\ldots,r\}^{k}
		\end{aligned}\right.\right\}.
	\end{aligned}
\end{equation}

Now we establish a closed-form formulation of (P1). 
For each $l\in\mathcal{H}_{ts}$, let $\hat{H}_{tl}$ and $\bar{H}_{tl}$ be the perspective function of $\hat{h}_{tl}$ and $\bar{h}_{tl}$, respectively.
Based on Theorem 2, the closed-form formulation of (P1) can be derived as 
\begin{equation}
	\textrm{(P-agg)}~\left\{
	\begin{aligned}
		\min ~& \sum_{t\in\mathcal{T}}\mathbf{c}_{t}^\top\mathbf{X}_{t}\\
		\mbox{s.t.}~&\sum_{t\in\mathcal{T}} \mathbf{a}_{tl}^\top \mathbf{X}_{t}\leq b_l, ~ l\in\mathcal{L},\\
		&\mathbf{X}_t=\{\mathbf{X}_{ts}\}_{s\in\mathcal{K}},~t\in\mathcal{T},\\
		&\mathbf{X}_{ts}=\mathbf{W}_{ts}+\mathbf{Z}_{ts}, ~ s\in \mathcal{K},~t\in\mathcal{T},\\
		&\hat{H}_{tl}(\mathbf{W}_{ts},Y_{t}^s)\leq Y_t^s d_{tl}, ~s\in \mathcal{K},~l\in\mathcal{H}_{ts}, ~t\in\mathcal{T},\\
		&\bar{H}_{tl}(\mathbf{Z}_{ts},N_t-Y_{t}^s)\leq (N_t-Y_{t}^s) d_{tl},~s\in \mathcal{K},~l\in\mathcal{H}_{ts},~t\in\mathcal{T},\\
		&\mathbf{A}_t \mathbf{Y}_{t}\leq N_t\mathbf{b}_t, ~\mathbf{Y}_t\in \{0,1,\ldots,N_t\}^k, ~ t\in\mathcal{T}.
	\end{aligned}
	\right.
\end{equation}
We call (P-agg) the aggregated reformulation of (P0). The technique of deriving (P-agg) integrates the methods of perspective reformulation and variable aggregation, thus is called a variable aggregation-based perspective reformulation. Such approach can break the symmetry in (P0) effectively.

We remark that if $h_{tl}(\cdot)$ is a convex quadratic function, then both 
\[
\hat{H}_{tl}(\mathbf{W}_{ts}, Y_t^s) \le Y_t^s d_{tl} \quad \text{and} \quad \bar{H}_{tl}(\mathbf{Z}_{ts}, N_t - Y_t^s) \le (N_t - Y_t^s)d_{tl}
\] are second-order cone representable \citep{Frangioni2009,Gunluk2010}. 
Thus, if the original problem (P0) is a mixed-integer convex quadratic optimization problem, then (P-agg) can be formulated as a mixed-integer second-order cone optimization problem.

\section{Tightness of the proposed reformulation}\label{sec4}

In the literature, it is known that the perspective reformulation technique can improve the tightness of the continuous relaxation of a mixed-integer convex optimization problem significantly. It is of interest to investigate whether the variable aggregation-based perspective reformulation technique has the same effect.
In this section, we study the tightness of the continuous relaxation of the proposed reformulation (P-agg). We first prove that under some mild assumptions, the convex hull of $N_t\otimes\Omega_t$ can be defined by the continuous relaxation of the formulation given in \eqref{eq18}. Then, we further compare the tightness of the continuous relaxations of various formulations of (P0).

\subsection{Convex hull of the aggregated formulation}

In this section, we derive the convex hull of the set $N_t\otimes\Omega_t$. For simplicity, we ignore the subscript $t$ and denote $\Omega_t$ as $\Omega$. Consider the set $\Omega^{\textrm{agg}}$ defined in \eqref{eq18}, whose continuous relaxation is denoted by $\tilde{\Omega}^{\textrm{agg}}$, defined as

\begin{equation}
	\begin{aligned}
		\tilde{\Omega}^{\textrm{agg}}=
		\left\{ (\mathbf{X},\mathbf{Y})  \left|\begin{aligned}
			&\hat{H}_{l}(\mathbf{W}_s,Y^s)\leq Y^s d_l, ~ l\in \mathcal{H}_s,~s\in\mathcal{K}\\
			&\bar{H}_{l}(\mathbf{Z}_s,r-Y^s)\leq (r-Y^s)d_l, ~ l\in \mathcal{H}_s,~s\in\mathcal{K}\\
			&\mathbf{X}_s=\mathbf{W}_s+\mathbf{Z}_s,~s\in\mathcal{K}\\
			&\mathbf{X}=\{\mathbf{X}_{s}\}_{s\in \mathcal{K} },~\mathbf{A} \mathbf{Y}\leq r\mathbf{b}
		\end{aligned}\right.\right\}.
	\end{aligned}
\end{equation}
One main theoretical result in this section is to show that $\textrm{conv}(r\otimes\Omega)=\tilde{\Omega}^{\textrm{agg}}$.

We first consider the set $\Omega^{\textrm{per}}$, whose continuous relaxation is denoted by $\tilde{\Omega}^{\textrm{per}}$, defined as 
\begin{equation}
	\tilde{\Omega}^{\textrm{per}} =
	\left\{(\mathbf{x},\mathbf{y})\left|\begin{aligned}
		&\hat{H}_{l}(\mathbf{w}_s,y^s)\leq y^s d_l,~ l\in \mathcal{H}_s,~s\in\mathcal{K}\\
		&\bar{H}_{l}(\mathbf{z}_s,1-y^s)\leq (1-y^s)d_l,~ l\in \mathcal{H}_s,~s\in\mathcal{K}\\
		&\mathbf{x}_s=\mathbf{w}_s+\mathbf{z}_s,~s\in\mathcal{K}\\
		&\mathbf{x}=\{\mathbf{x}_s\}_{s\in\mathcal{K}},~\mathbf{A}\mathbf{y}\leq \mathbf{b}
	\end{aligned}\right.\right\}.
\end{equation}
We assume that the following condition holds:

\textbf{Assumption 1:} $\tilde{\Omega}^{\textrm{per}}$ contains a relative interior-point solution.

Then we have the following two lemmas:

\begin{lemma}\label{lem2}
	Under Assumption 1, we have $\textrm{conv}(\Omega)=\tilde{\Omega}^{\textrm{per}}$.
\end{lemma}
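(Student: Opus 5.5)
We prove the two inclusions $\textrm{conv}(\Omega)\subseteq\tilde{\Omega}^{\textrm{per}}$ and $\tilde{\Omega}^{\textrm{per}}\subseteq\textrm{conv}(\Omega)$. Assumption 1 is used to ensure that $\tilde{\Omega}^{\textrm{per}}$ is a closed convex set whose defining perspective constraints behave well on the boundary, where $y^s\in\{0,1\}$.

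\emph{First inclusion.} Each perspective function $\hat{H}_l$, $\bar{H}_l$ is convex, and the constraints $\hat{H}_l(\mathbf{w}_s,y^s)-y^sd_l\le 0$ and $\bar{H}_l(\mathbf{z}_s,1-y^s)-(1-y^s)d_l\le 0$ are jointly convex in $(\mathbf{w}_s,\mathbf{z}_s,y^s)$. The remaining constraints of $\tilde{\Omega}^{\textrm{per}}$ are linear. Hence the projection of $\tilde{\Omega}^{\textrm{per}}$ onto $(\mathbf{x},\mathbf{y})$ is convex. By \eqref{eq15}, i.e.\ Lemma \ref{lem1} and Theorem \ref{thm1} applied with $r=y^s\in\{0,1\}$ and $r=1-y^s$, we have $\Omega=\Omega^{\textrm{per}}\subseteq\tilde{\Omega}^{\textrm{per}}$. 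Therefore $\textrm{conv}(\Omega)\subseteq\tilde{\Omega}^{\textrm{per}}$.

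\emph{Second inclusion (the main step).} Fix $(\mathbf{x},\mathbf{y})\in\tilde{\Omega}^{\textrm{per}}$ with witnesses $\mathbf{w}_s,\mathbf{z}_s$.
\begin{enumerate}
\item Since $\mathbf{A}$ is totally unimodular and $\mathbf{b}$ is integral, the polytope $\{\mathbf{y}\mid\mathbf{A}\mathbf{y}\le\mathbf{b}\}\subseteq[0,1]^k$ is integral. So we can write $\mathbf{y}=\sum_j\lambda_j\mathbf{y}_j$ with $\lambda_j>0$, $\sum_j\lambda_j=1$, and each $\mathbf{y}_j\in\{0,1\}^k$ satisfying $\mathbf{A}\mathbf{y}_j\le\mathbf{b}$.
\item Apply Theorem \ref{thm1} to each block $s$. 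Since $\Lambda_s$ and $\Gamma_s$ are bounded (because $\Omega$ is bounded), $\hat{H}_l(\mathbf{w}_s,y^s)\le y^sd_l$ for all $l$ gives $\mathbf{w}_s\in y^s\odot\Lambda_s$. Similarly $\mathbf{z}_s\in(1-y^s)\odot\Gamma_s$. So if $y^s>0$ we have $\mathbf{w}_s=y^s\mathbf{u}_s$ with $\mathbf{u}_s\in\Lambda_s$, and if $y^s<1$ we have $\mathbf{z}_s=(1-y^s)\mathbf{v}_s$ with $\mathbf{v}_s\in\Gamma_s$.
\item Define $\mathbf{x}_{j,s}=\mathbf{u}_s$ if $y^s_j=1$ and $\mathbf{x}_{j,s}=\mathbf{v}_s$ if $y^s_j=0$. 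By \eqref{eq14}, $(\mathbf{x}_j,\mathbf{y}_j)\in\Omega$.
\item Since $\sum_{j:y^s_j=1}\lambda_j=y^s$, we get $\sum_j\lambda_j\mathbf{x}_{j,s}=y^s\mathbf{u}_s+(1-y^s)\mathbf{v}_s=\mathbf{x}_s$.
\end{enumerate}
The degenerate cases need care. If $y^s=0$, then no $\mathbf{y}_j$ has $y^s_j=1$, so $\mathbf{u}_s$ is never used; this is consistent with $\mathbf{w}_s=\mathbf{0}$, which holds because $0\odot\Lambda_s=\{\mathbf{0}\}$. The case $y^s=1$ is symmetric. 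Consequently $(\mathbf{x},\mathbf{y})\in\textrm{conv}(\Omega)$.

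The obstacle I expect is justifying that the projected relaxation is exactly described by the perspective constraints. Specifically, Theorem \ref{thm1} needs $\Lambda_s$ and $\Gamma_s$ to be nonempty whenever the corresponding $y^s$ can take value $1$ or $0$. If, say, $\Lambda_s=\emptyset$, the perspective function is $+\infty$ except at the origin, which forces $y^s=0$ throughout. Assumption 1 (a relative interior point) is what I would invoke to rule out such pathologies: it guarantees that the convex constraint system has a Slater-type point, so that the closure of the perspective coincides with the perspective as in the remark after Definition \ref{def3}, and the decomposition above is valid. The remaining verifications are routine given the block-separable structure, because the constraints couple $\mathbf{x}_s$ only with its own $y^s$.
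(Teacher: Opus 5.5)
Your proof is correct, and it is a direct construction. Total unimodularity gives an integral decomposition $\mathbf{y}=\sum_j\lambda_j\mathbf{y}_j$. Because block $\mathbf{x}_s$ is coupled only to $y^s$, the same two points $\mathbf{u}_s=\mathbf{w}_s/y^s\in\Lambda_s$ and $\mathbf{v}_s=\mathbf{z}_s/(1-y^s)\in\Gamma_s$ can be reused in every $\mathbf{x}_j$, so $\sum_j\lambda_j\mathbf{x}_{j,s}=\mathbf{w}_s+\mathbf{z}_s$ is automatic.

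What sets your route apart is that it never uses Assumption 1, which the paper imposes for this lemma. (The paper's own proof is in a supplementary appendix not included here, so I cannot compare step by step.) Your argument shows that, with the perspective taken exactly as in Definition~\ref{def3} (not its closure), $\textrm{conv}(\Omega)=\tilde{\Omega}^{\textrm{per}}$ holds for every set of the form \eqref{eq12}. It also yields an explicit convex-combination certificate for each point of the relaxation. A relative-interior hypothesis of the paper's kind is what general convex-analytic tools (closure or duality arguments) typically require.

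Your final paragraph should be dropped; the difficulties it anticipates do not arise.
\begin{itemize}
\item Step 2 does not need Theorem~\ref{thm1}. If $y^s>0$, the constraint $\hat{H}_l(\mathbf{w}_s,y^s)\le y^sd_l$ is, by Definition~\ref{def3}, just $\hat{h}_l(\mathbf{w}_s/y^s)\le d_l$. If $y^s=0$, $\hat{H}_l(\cdot,0)$ is finite only at $\mathbf{0}$, which forces $\mathbf{w}_s=\mathbf{0}$.
\item Consequently no boundedness or nonemptiness of $\Lambda_s,\Gamma_s$ is used. That matters, because ``$\Lambda_s$ is bounded since $\Omega$ is bounded'' is not valid for a block whose $y^s$ never equals $1$ at a point of $\Omega$.
\item Similarly, $\Omega\subseteq\tilde{\Omega}^{\textrm{per}}$ follows by taking $(\mathbf{w}_s,\mathbf{z}_s)=(\mathbf{x}_s,\mathbf{0})$ or $(\mathbf{0},\mathbf{x}_s)$, without appealing to Theorem~\ref{thm1}.
\item The case $\Lambda_s=\emptyset$ needs no exclusion. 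No point of $\tilde{\Omega}^{\textrm{per}}$ can then have $y^s>0$, so every $\mathbf{y}_j$ in your decomposition has $y^s_j=0$ and $\mathbf{u}_s$ is never needed. The same holds symmetrically for $\Gamma_s$.
\item Read literally, Assumption 1 could not do this work anyway. Every nonempty convex set in finite dimension has nonempty relative interior.
\item The closure issue you raise does not occur either, since $\tilde{\Omega}^{\textrm{per}}$ is written with $\hat{H}_l,\bar{H}_l$ themselves rather than their closures.
\end{itemize}
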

\begin{proof}
	Please see Appendix E in Supplementary Materials for the proof.
\end{proof}

\begin{lemma}\label{lemx}
	Let $\mathcal{F}\subseteq\mathbb{R}^{d}$ be a nonempty bounded closed set and $r$ be a nonnegative integer, then we have $\textrm{conv}(r\otimes \mathcal{F})=r\otimes \textrm{conv}(\mathcal{F})$.
\end{lemma}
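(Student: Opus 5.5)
We prove the two inclusions $\textrm{conv}(r\otimes \mathcal{F})\subseteq r\otimes \textrm{conv}(\mathcal{F})$ and $r\otimes \textrm{conv}(\mathcal{F})\subseteq\textrm{conv}(r\otimes \mathcal{F})$ separately. The case $r=0$ is trivial, since both sides equal $\{\mathbf{0}\}$ by Definition \ref{def1}, so we assume $r\geq 1$. Since $\mathcal{F}$ is bounded and closed, $\textrm{conv}(\mathcal{F})$ is a nonempty compact convex set (Carathéodory). Hence Lemma \ref{lem1} applies and gives $r\otimes\textrm{conv}(\mathcal{F})=r\odot\textrm{conv}(\mathcal{F})$. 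In particular, the right-hand side is convex.

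For the first inclusion, take any $\mathbf{x}_1,\ldots,\mathbf{x}_r\in\mathcal{F}$. Each $\mathbf{x}_i$ lies in $\textrm{conv}(\mathcal{F})$, so $\sum_i\mathbf{x}_i\in r\otimes\textrm{conv}(\mathcal{F})$, and therefore $r\otimes\mathcal{F}\subseteq r\otimes\textrm{conv}(\mathcal{F})$. The right-hand side is convex by the observation above, so taking convex hulls of both sides gives $\textrm{conv}(r\otimes\mathcal{F})\subseteq r\otimes\textrm{conv}(\mathcal{F})$.

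For the reverse inclusion, let $\mathbf{X}=\sum_{i=1}^r\mathbf{u}_i$ with each $\mathbf{u}_i\in\textrm{conv}(\mathcal{F})$. Write each term as a finite convex combination $\mathbf{u}_i=\sum_{j\in J_i}\lambda_{ij}\mathbf{v}_{ij}$ with $\mathbf{v}_{ij}\in\mathcal{F}$, $\lambda_{ij}\geq 0$ and $\sum_j\lambda_{ij}=1$. Expanding the product of the $r$ sums gives
\[
\mathbf{X}=\sum_{(j_1,\ldots,j_r)\in J_1\times\cdots\times J_r}\Big(\prod_{i=1}^r\lambda_{ij_i}\Big)\Big(\sum_{i=1}^r\mathbf{v}_{ij_i}\Big).
\]
This identity holds because, for each fixed $i$, summing the product weights over all other coordinates returns $\lambda_{ij_i}$. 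The weights $\prod_i\lambda_{ij_i}$ are nonnegative and sum to $\prod_i\sum_j\lambda_{ij}=1$. Each point $\sum_i\mathbf{v}_{ij_i}$ belongs to $r\otimes\mathcal{F}$. So $\mathbf{X}$ is a convex combination of points of $r\otimes\mathcal{F}$, i.e. $\mathbf{X}\in\textrm{conv}(r\otimes\mathcal{F})$.

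The only step needing care is this product-of-convex-combinations expansion, specifically verifying that the product weights reproduce each $\mathbf{u}_i$. Closedness is not actually needed for the equality as stated, because both arguments use only finite convex combinations. Boundedness and closedness only guarantee that the sets are compact, which makes Lemma \ref{lem1} directly applicable and ensures that taking the convex hull introduces no closure issues.
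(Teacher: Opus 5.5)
Your proof is correct, and your first inclusion is the natural one. The paper defers its own proof to the supplement, so I cannot compare line by line. For the reverse inclusion, though, Lemma~\ref{lem1} (stated just before and presumably the intended tool) gives a shorter route than yours. Write $\mathbf{X}\in r\otimes\textrm{conv}(\mathcal{F})=r\odot\textrm{conv}(\mathcal{F})$ as $\mathbf{X}=r\mathbf{u}$ with a single $\mathbf{u}=\sum_j\lambda_j\mathbf{v}_j$, $\mathbf{v}_j\in\mathcal{F}$. Then $\mathbf{X}=\sum_j\lambda_j(r\mathbf{v}_j)$, and each $r\mathbf{v}_j=\mathbf{v}_j+\cdots+\mathbf{v}_j$ lies in $r\otimes\mathcal{F}$. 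Only the ``diagonal'' points of $r\otimes\mathcal{F}$ are needed, and no product weights appear.

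Your expansion over $J_1\times\cdots\times J_r$ does more work but buys generality: it never uses that the summands come from the same set. In your first inclusion, replace Lemma~\ref{lem1} with the elementary fact that a Minkowski sum of convex sets is convex. Your argument then proves $\textrm{conv}(\mathcal{F}_1+\cdots+\mathcal{F}_r)=\textrm{conv}(\mathcal{F}_1)+\cdots+\textrm{conv}(\mathcal{F}_r)$ for arbitrary nonempty sets, which would also cover aggregation of non-identical blocks.

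One small correction to your closing commentary: Lemma~\ref{lem1} needs only a nonempty convex set. So compactness of $\textrm{conv}(\mathcal{F})$, and the appeal to Carath\'eodory, play no role in applying it. Your observation that boundedness and closedness are not needed for the equality itself is right.
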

\begin{proof}
	Please see Appendix F in Supplementary Materials for the proof.
\end{proof}

Now, we present our major theoretical result showing that the continuous relaxation $\tilde{\Omega}^{\textrm{agg}}$ is tight.
\begin{theorem}\label{thm3}
	Under Assumption 1, for any $r\in\mathbb{Z}_+$, we have
	$\tilde{\Omega}^{\textrm{agg}}=\textrm{conv}(r\otimes\Omega)=r\odot\tilde{\Omega}^{\textrm{per}}.$
\end{theorem}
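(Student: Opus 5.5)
The proof will chain three equalities:
\[
\textrm{conv}(r\otimes\Omega)=r\otimes\textrm{conv}(\Omega)=r\otimes\tilde{\Omega}^{\textrm{per}}=r\odot\tilde{\Omega}^{\textrm{per}},
\]
and then show separately that $r\odot\tilde{\Omega}^{\textrm{per}}=\tilde{\Omega}^{\textrm{agg}}$. The case $r=0$ is trivial: every set is $\{\mathbf{0}\}$, and in $\tilde{\Omega}^{\textrm{agg}}$ the constraint $\mathbf{A}\mathbf{Y}\leq\mathbf{0}$ together with boundedness forces $\mathbf{Y}=\mathbf{0}$, hence $\mathbf{W}_s=\mathbf{Z}_s=\mathbf{0}$. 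So assume $r\geq 1$.

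\textbf{The chain.} $\Omega$ is bounded and closed: it is a finite union over binary $\mathbf{y}$ of closed bounded convex pieces. Hence Lemma \ref{lemx} gives the first equality. Lemma \ref{lem2}, under Assumption 1, gives the second. Since $\tilde{\Omega}^{\textrm{per}}$ is convex and nonempty, Lemma \ref{lem1} gives the third.

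\textbf{Inclusion $r\odot\tilde{\Omega}^{\textrm{per}}\subseteq\tilde{\Omega}^{\textrm{agg}}$.} Take $(\mathbf{x},\mathbf{y})\in\tilde{\Omega}^{\textrm{per}}$ with witnesses $\mathbf{w}_s,\mathbf{z}_s$, and set
\[
(\mathbf{X},\mathbf{Y},\mathbf{W},\mathbf{Z})=r(\mathbf{x},\mathbf{y},\mathbf{w},\mathbf{z}).
\]
Perspective functions are positively homogeneous of degree one, so $\hat{H}_l(r\mathbf{w}_s,ry^s)=r\hat{H}_l(\mathbf{w}_s,y^s)\leq rY^s d_l/r\cdot r$, i.e.\ $\leq Y^sd_l$. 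Likewise $\bar{H}_l(r\mathbf{z}_s,r-ry^s)=r\bar{H}_l(\mathbf{z}_s,1-y^s)\leq(r-Y^s)d_l$. Finally $\mathbf{A}\mathbf{Y}=r\mathbf{A}\mathbf{y}\leq r\mathbf{b}$.

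\textbf{Inclusion $\tilde{\Omega}^{\textrm{agg}}\subseteq r\odot\tilde{\Omega}^{\textrm{per}}$.} Divide all variables by $r$ and apply the same homogeneity argument in reverse.

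The main care point is the boundary cases where a perspective argument vanishes, namely $Y^s=0$ or $Y^s=r$. There the perspective constraints force $\mathbf{W}_s=\mathbf{0}$ or $\mathbf{Z}_s=\mathbf{0}$, because $\Lambda_s$ and $\Gamma_s$ are bounded and the perspective equals $+\infty$ off the origin when $t=0$. These cases are consistent under scaling, since Definition \ref{def3} sets $H(\mathbf{0},0)=0$ and homogeneity holds there as well.

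One more check: we need $0\leq Y^s\leq r$ in $\tilde{\Omega}^{\textrm{agg}}$ so that the perspective arguments are nonnegative. This is implicit in the domains, because the perspective is $+\infty$ for a negative second argument. It also follows from $\{\mathbf{y}\mid\mathbf{A}\mathbf{y}\leq\mathbf{b}\}\subseteq[0,1]^k$ after scaling.

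The remaining subtlety is confirming that $\Omega$ meets the hypotheses of Lemma \ref{lemx}. Boundedness is assumed. Closedness follows from the closedness of the $h_l$ sublevel sets and the finiteness of the binary choices.
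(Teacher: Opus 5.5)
Your proof is correct and follows essentially the same route as the paper. You chain Lemma~\ref{lemx}, Lemma~\ref{lem2} and Lemma~\ref{lem1} to obtain $\textrm{conv}(r\otimes\Omega)=r\odot\tilde{\Omega}^{\textrm{per}}$, and then identify $r\odot\tilde{\Omega}^{\textrm{per}}$ with $\tilde{\Omega}^{\textrm{agg}}$ through positive homogeneity of the perspective functions, which is the mechanism behind Theorem~\ref{thm1}. The only blemish is the garbled inequality in your scaling step, which should simply read $r\hat{H}_l(\mathbf{w}_s,y^s)\leq r y^s d_l=Y^s d_l$.
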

\begin{proof}
	Please see Appendix G in Supplementary Materials for the proof.
\end{proof}

It is well-known that with the aid of perspective reformulation, the convex hull of several mixed-integer convex sets can be derived (as discussed in Section~\ref{sec1}). However, in the existing literature, the perspective reformulation has never been integrated with the variable aggregation technique. In this work, we do so and study its impact to the continuous relaxation of the aggregated formulation for the first time.

\subsection{Tightness results for various formulations}
We further discuss the relationship between the continuous relaxations of various formulations. In the previous sections, we have introduced two formulations, the original formulation (P0) and the aggregated formulation (P-agg). Besides these two formulations, since the set $\Omega_t$ can be reformulated as $\Omega_t^{\textrm{per}}$ using \eqref{eq15}, we can derive the following perspective reformulation of (P0):
\begin{equation}
	\textrm{(P-per)}~\left\{
	\begin{aligned}
		\min & \sum_{t\in\mathcal{T}}\sum_{i\in \mathcal{I}_{t}}\mathbf{c}_t^\top\mathbf{x}_i \\
		\mbox{s.t.}&\sum_{t\in\mathcal{T}}\sum_{i \in \mathcal{I}_{t}}\mathbf{a}_{tl}^\top \mathbf{x}_i\leq b_l,~l\in\mathcal{L}, \\
		&\mathbf{x}_i=\{\mathbf{x}_{is}\}_{s\in\mathcal{K}},~i \in \mathcal{I}_{t},~t\in\mathcal{T},\\
		&\mathbf{x}_{is}=\mathbf{w}_{is}+\mathbf{z}_{is}, ~s\in \mathcal{K},~i \in \mathcal{I}_{t},~t\in\mathcal{T},\\
		&\hat{H}_{tl}(\mathbf{w}_{is},y_i^s)\leq y_i^s d_l, ~\bar{H}_{tl}(\mathbf{z}_{is},1-y_i^s)\leq (1-y_i^s)d_l,\\
		&\quad\quad\quad\quad\quad\quad\quad\quad\quad\quad~ s\in \mathcal{K},~l\in\mathcal{H}_{ts},~i \in \mathcal{I}_{t},~t\in\mathcal{T},\\
		&\mathbf{A}_t\mathbf{y}_i\leq \mathbf{b}_t, ~\mathbf{y}_i\in \{0,1\}^k,~i \in \mathcal{I}_{t},~t\in\mathcal{T}.
	\end{aligned}
	\right.
\end{equation}
The main purpose of this subsection is to compare the tightness of continuous relaxations of (P0), (P-per) and (P-agg). For (P0), the continuous relaxation is given as
\begin{equation}\label{cP0}
	\textrm{}~\left\{
	\begin{aligned}
		\min ~& \sum_{t\in\mathcal{T}}\sum_{i\in \mathcal{I}_{t}}\mathbf{c}_t^\top\mathbf{x}_i \\
		\mbox{s.t.}~&\sum_{t\in\mathcal{T}}\sum_{i \in \mathcal{I}_{t}}\mathbf{a}_{tl}^\top \mathbf{x}_i\leq b_l,~l\in\mathcal{L}, \\
		&(\mathbf{x}_i,\mathbf{y}_i)\in \mathcal{C}_t,~i\in \mathcal{I}_t,~t\in\mathcal{T},
	\end{aligned}
	\right.
\end{equation}
where $\mathcal{C}_t$ denotes the continuous relaxation of $\Omega_t$ in its original form defined in \eqref{eq12} without using perspective reformulation, expressed as
\begin{equation}\label{eq10}
	\mathcal{C}_t=\left\{ (\mathbf{x},\mathbf{y})
	\left|\,\begin{aligned}
		&h_{tl}(\mathbf{x}_s,y^s)\leq d_{tl}, ~l\in\mathcal{H}_{ts},~s\in\mathcal{K}\\
		&\mathbf{x}=\{\mathbf{x}_s\}_{s\in\mathcal{K}},~\mathbf{A}_t\mathbf{y} \leq \mathbf{b}_t
	\end{aligned}\right.\right\}.
\end{equation}
Using the notations defined in Section 4.1, the continuous relaxation of (P-per) can be formulated as
\begin{equation}\label{ppre}
	\textrm{}\left\{
	\begin{aligned}
		\min ~& \sum_{t\in\mathcal{T}}\sum_{i\in \mathcal{I}_{t}}\mathbf{c}_t^\top\mathbf{x}_i \\
		\mbox{s.t.}~&\sum_{t\in\mathcal{T}}\sum_{i \in \mathcal{I}_{t}}\mathbf{a}_{tl}^\top \mathbf{x}_i\leq b_l,~l\in\mathcal{L}, \\
		&(\mathbf{x}_i,\mathbf{y}_i)\in \tilde{\Omega}_t^{\textrm{per}},~i\in\mathcal{I}_t,~t\in \mathcal{T},
	\end{aligned}
	\right.
\end{equation}
and the continuous relaxation of (P-agg) can be formulated as
\begin{equation}\label{pare}
	\textrm{}~\left\{
	\begin{aligned}
		\min ~& \sum_{t\in\mathcal{T}}\mathbf{c}_{t}^\top\mathbf{X}_{t}\\
		\mbox{s.t.}~&\sum_{t\in\mathcal{T}} \mathbf{a}_{tl}^\top \mathbf{X}_{t}\leq b_l,~ l\in\mathcal{L},\\
		&(\mathbf{X}_t,\mathbf{Y}_t)\in  \tilde{\Omega}_t^{\textrm{agg}},~t\in \mathcal{T}.
	\end{aligned}
	\right.
\end{equation}

We use LB$_{(\cdot)}$ to represent the lower bound computed by the continuous relaxation of problem $(\cdot)$. The tightness of the three formulations (P0), (P-per) and (P-agg) is shown in the next result.

\begin{theorem}\label{thm4}
	For the tightness of the continuous relaxations of (P0), (P-per) and (P-agg), we have  $\textrm{LB}_{(\textrm{P-agg})}=\textrm{LB}_{(\textrm{P-per})}\geq  \textrm{LB}_{(\textrm{P0})}$.
\end{theorem}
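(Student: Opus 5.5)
We prove the two parts of the statement separately: first $\textrm{LB}_{(\textrm{P-per})}\geq \textrm{LB}_{(\textrm{P0})}$, then $\textrm{LB}_{(\textrm{P-agg})}=\textrm{LB}_{(\textrm{P-per})}$. For the equality we invoke Theorem \ref{thm3}, so Assumption 1 is taken to hold for every $\Omega_t$.

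\emph{The inequality} reduces to showing $\tilde{\Omega}_t^{\textrm{per}}\subseteq \mathcal{C}_t$ for every $t$. The two relaxations then share the same linear objective and coupling constraints, and the feasible region of \eqref{ppre} is contained in that of \eqref{cP0}. By Lemma \ref{lem2}, $\tilde{\Omega}_t^{\textrm{per}}=\textrm{conv}(\Omega_t)$. Also $\mathcal{C}_t$ is convex and contains $\Omega_t$: on $\{0,1\}^k$ the constraints of $\mathcal{C}_t$ coincide with those of \eqref{eq12}. Hence $\textrm{conv}(\Omega_t)\subseteq\mathcal{C}_t$, and the inclusion follows. If one wants to avoid Assumption 1 here, there is a direct route. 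Write $\mathbf{x}_s=\mathbf{w}_s+\mathbf{z}_s$ with $\mathbf{w}_s=y^s\mathbf{u}$, $\mathbf{z}_s=(1-y^s)\mathbf{v}$, where $\mathbf{u}\in\Lambda_s$ and $\mathbf{v}\in\Gamma_s$. Convexity of $h_l$ then gives
\[
h_l(\mathbf{x}_s,y^s)\le y^s h_l(\mathbf{u},1)+(1-y^s)h_l(\mathbf{v},0)\le d_l .
\]
The boundary cases $y^s\in\{0,1\}$ are handled because $\Lambda_s$ and $\Gamma_s$ are bounded, so the perspectives force $\mathbf{w}_s=\mathbf{0}$ (respectively $\mathbf{z}_s=\mathbf{0}$).

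\emph{The equality} is a two-way comparison of optimal values through the aggregation map $(\mathbf{x}_i,\mathbf{y}_i)_{i\in\mathcal{I}_t}\mapsto(\mathbf{X}_t,\mathbf{Y}_t)=(\sum_{i\in\mathcal{I}_t}\mathbf{x}_i,\sum_{i\in\mathcal{I}_t}\mathbf{y}_i)$. This map preserves the objective and the coupling constraints exactly, so it suffices to show that, for each $t$, the image of $(\tilde{\Omega}_t^{\textrm{per}})^{N_t}$ under summation equals $\tilde{\Omega}_t^{\textrm{agg}}$.

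\begin{itemize}
\item[(a)] The image equals $N_t\otimes\tilde{\Omega}_t^{\textrm{per}}$ by Definition \ref{def1}.
\item[(b)] Since $\tilde{\Omega}_t^{\textrm{per}}$ is convex and nonempty, Lemma \ref{lem1} gives $N_t\otimes\tilde{\Omega}_t^{\textrm{per}}=N_t\odot\tilde{\Omega}_t^{\textrm{per}}$.
\item[(c)] Theorem \ref{thm3} gives $N_t\odot\tilde{\Omega}_t^{\textrm{per}}=\tilde{\Omega}_t^{\textrm{agg}}$.
\end{itemize}

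Consequently, any feasible point of \eqref{ppre} aggregates to a feasible point of \eqref{pare} with the same value. Conversely, any feasible $(\mathbf{X}_t,\mathbf{Y}_t)\in\tilde{\Omega}_t^{\textrm{agg}}$ can be written as $N_t\cdot(\mathbf{x},\mathbf{y})$ with $(\mathbf{x},\mathbf{y})\in\tilde{\Omega}_t^{\textrm{per}}$. Setting $(\mathbf{x}_i,\mathbf{y}_i)=(\mathbf{x},\mathbf{y})$ for all $i\in\mathcal{I}_t$ gives a feasible point of \eqref{ppre} with the same objective value. The two optimal values therefore coincide.

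Two points need care, and these are where I expect the main obstacle. The first is the degenerate case $N_t=0$, where both sides reduce to $\{\mathbf{0}\}$ by the conventions in Definitions \ref{def1} and \ref{def2}. The second is making sure Theorem \ref{thm3} is applied to the relaxed set with $r=N_t$. Once these are checked, the argument is just bookkeeping with the previously established set identities.
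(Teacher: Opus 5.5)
Your proof is correct and follows essentially the route the paper's setup points to (the paper's own proof is in a supplementary appendix not included here). The inequality comes from $\tilde{\Omega}_t^{\textrm{per}}=\textrm{conv}(\Omega_t)\subseteq\mathcal{C}_t$, and the equality from the aggregation/disaggregation correspondence $N_t\otimes\tilde{\Omega}_t^{\textrm{per}}=N_t\odot\tilde{\Omega}_t^{\textrm{per}}=\tilde{\Omega}_t^{\textrm{agg}}$ supplied by Lemma~\ref{lem1} and Theorem~\ref{thm3}. The case $N_t=0$ never arises, since each $\mathcal{I}_t$ is a nonempty equivalence class. Also, step (c) holds without Assumption 1 by positive homogeneity of the perspective functions, so together with your direct route for the inequality the theorem holds as stated, without that assumption.
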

\begin{proof}
	Please see Appendix H in Supplementary Materials for the proof.
\end{proof}

Our theoretical results indicate that by integrating the perspective reformulation with the variable aggregation technique to derive (P-agg), not only can the symmetry be broken, but also the tightness of continuous relaxation can be improved significantly.

\section{Application to the unit commitment problem}\label{sec5}
The unit commitment problem in power systems is to determine the operation states and generation power of a set of generating units $\mathcal{I}$ over a period of time $\mathcal{M} := \{1, \ldots, n\}$ to minimize the total operation cost. Several mixed-integer programming formulations for the problem have been proposed in the literature \citep{Knueven2020}. One of the most well-known formulation is the classical 3-bin formulation \citep{Rajan2005}. However, due to the ramp constraints, the classical 3-bin formulation fails to satisfy the structure of (P0), so that the variable aggregated reformulation is not applicable. Thus, we consider another formulation, known as the dynamic programming-based formulation, that was proposed in \cite{Bacci2023}. For completeness of this paper, we provide a sketch of the dynamic programming-based formulation. One may refer to \cite{Bacci2023} for more details.

In the dynamic programming-based formulation, a generating unit may have two operating states: on and off. In order to formulate a valid state sequence that satisfies the minimum up and down time constraints, a graph $G_i=\{\mathcal{N}_i,\mathcal{A}_i\}$ is introduced for each generating unit $i\in\mathcal{I}$. The node set $\mathcal{N}_i$ contains two types of nodes: $ON_{ij}$ and $OFF_{ij}$ for each $j\in \mathcal{M}$, plus two special nodes, source $S_i$ and sink $D_i$. The set $\mathcal{A}_i$ contains two types of arcs: One is the ON arc, $(OFF_{ih}, ON_{ir})$, representing that the generating unit is turned on in the time period $h$, and remains in the on state until the time period $r$. Another is the OFF arc, $(ON_{ih}, OFF_{ir})$, representing that the generating unit is turned off from time period $h+1$ to the time period $r-1$. The nodes $S_i$ and $D_i$ are also connected to (or connected by) some suitable nodes to define ON and OFF arcs.

Given the graph $G_i=\{\mathcal{N}_i,\mathcal{A}_i\}$, a valid state sequence of generating unit $i$ can be represented by a path from $S_i$ to $D_i$. Then, a vector $\mathbf{y}_{i}\in\{0,1\}^{k_i}$ is introduced, where $k_i=|\mathcal{A}_i|$, and each entry of $\mathbf{y}_{i}$ is associated with an arc in $\mathcal{A}_i$. Let $\mathcal{K}_i:=\{1,\ldots,k_i\}$. The element $s\in\mathcal{K}_i$ indexes both the entry of $\mathbf{y}_{i}$ and the associated arc. Then, a path from $S_i$ to $D_i$ can be represented by using the network flow formulation, $\mathbf{E}_{i}\mathbf{y}_{i}=\mathbf{\delta}_{i}$,
where $\mathbf{E}_{i}$ is the node-arcs incidence matrix of the graph $G_i$ and $\mathbf{\delta}_{i}\in \{0,1\}^{|\mathcal{N}_i|}$ with each entry being associated with a node, and all entries of $\mathbf{\delta}_{i}$ equaling zero, except the entries $\delta_{S_i}=-1$ and $\delta_{D_i}=1$ for the source and sink nodes, respectively. We remark that $\mathbf{E}_{i}$ is a totally unimodular matrix.

The set $\mathcal{K}_i$ is partitioned into two subsets $\mathcal{K}_i^+$ and $\mathcal{K}_i^-$, which represent the subsets of ON arcs and OFF arcs, respectively. Note that for an OFF arc, the generating unit is turned off and the power is fixed to zero. Hence, we only need to determine the power over the time periods associated with an ON arc. Let $s\in \mathcal{K}_i^+$ with associated time periods from $h$ to $r$. A vector $\mathbf{x}_{is}\in \mathbb{R}^{n}$ is introduced to represent the power output of the generating unit over the time periods from $h$ to $r$, while the power output being not in these time periods is fixed to zero. The set $\Lambda_{is}$ is defined to represent some physical limits on the power, including minimum and maximum power limits, start-up and shut-down limits and ramp limits. Moreover, $\mathbf{d}$ denotes the system load over the whole time horizon.

Finally, the total cost for each generating unit $i\in\mathcal{I}$ can be represented as $\mathbf{c}_{i}^\top\mathbf{y}_{i}+\sum_{s \in \mathcal{K}_i} f_{is}(\mathbf{x}_{is})$, where $\mathbf{c}_{i}$ represents the fixed operation cost, and $f_{is}(\mathbf{x}_{is})$ represents the variable operation cost which is usually a separable convex quadratic function. In detail, when $s\in\mathcal{K}_i^-$, the entry $c_{is}$ of $\mathbf{c}_{i}$ is equal to the start-up cost. On the other hand, when $s\in\mathcal{K}_i^+$, the entry $c_{is}$ is equal to $\tau_s\hat{c}_i$, where $\hat{c}_i$ represents the minimum running cost of generator $i$, and $\tau_s$ represents the length of the time interval associated with the ON arc. Clearly, for each $s \in \mathcal{K}_i^+$, we can move the nonlinear part of the total cost $f_{is}(\mathbf{x}_{is})$ into the constraints by introducing an auxiliary variable $z_{is}$, so that the set $\Lambda_{is}$ further contains the constraints $f_{is}(\mathbf{x}_{is})\leq z_{is}$ and $\underline{z}_{is}\leq z_{is}\leq \bar{z}_{is}$, where $\underline{z}_{is}=\min_{\mathbf{x}_{is}\in \Lambda_{is}}f_{is}(\mathbf{x}_{is})$ and $\bar{z}_{is}=\max_{\mathbf{x}_{is}\in \Lambda_{is}}f_{is}(\mathbf{x}_{is})$.

Based on the above descriptions, the unit commitment problem can be formulated as follows:
\begin{equation}\label{UC}
	\textrm{(UC)}~\left\{
	\begin{aligned}
		\min ~& \sum_{i\in \mathcal{I}}\left(\mathbf{c}_{i}^\top\mathbf{y}_{i}+\sum_{s \in \mathcal{K}_i^+}z_{is}\right)\\
		\mbox{s.t.}~&\sum_{i \in \mathcal{I}}\sum_{s \in \mathcal{K}_i^+}\mathbf{x}_{is} = \mathbf{d}, \\
		&(\widetilde{\mathbf{x}}_{i},\mathbf{y}_i) \in\Omega_i= 
		\left\{ (\widetilde{\mathbf{x}}_{i},\mathbf{y}_i)\left|
		\begin{aligned}
			&\mathbf{x}_{is} \in y_i^s\otimes \Lambda_{is}+(1-y^s_i)\otimes\Gamma_{is}, \\
			&\quad\quad\quad\quad\quad\quad\quad\quad\quad~s \in \mathcal{K}_i^+\\
			&\widetilde{\mathbf{x}}_{i} =  \left\{ \mathbf{x}_{is} , z_{is} \right\}_{s \in \mathcal{K}_i^+},~\mathbf{E}_{i}\mathbf{y}_{i}=\mathbf{\delta}_{i},\\
			&\mathbf{y}_{i}\in\{0,1\}^{k_i}
		\end{aligned}
		\right.\right\},\\
		&\quad\quad\quad\quad\quad\quad\quad\quad\quad\quad\quad\quad\quad\quad\quad\quad\quad\quad\quad\quad\quad\quad\quad\quad~i\in\mathcal{I},
	\end{aligned}
	\right.
\end{equation}
where $\Gamma_{is}:=\{\mathbf{0}\}$ for each $i\in\mathcal{I}$ and $s \in \mathcal{K}_i^+$. It is clear that (UC) is a special case of \textrm{(P0)}. Hence the perspective reformulation (P-per) for (P0) can be applied to (UC). We denote the perspective reformulation of (UC) as (UC-per), whose complete formulation is given in the Appendix I of Supplementary Materials. We remark that (UC-per) is exactly the dynamic programming-based formulation proposed in \cite{Bacci2023}.

For the cases involving some identical generating units, we partition $\mathcal{I}$ into $T$ subsets such that $\mathcal{I}=\cup_{t=1}^T\mathcal{I}_{t}$, each of which represents a set of identical generating units that have the same initial states.
Since for each $i\in\mathcal{I}$, $\mathbf{E}_i$ is totally unimodular, and $\mathbf{\delta}_i$ is an integer vector, the variable aggregation-based reformulation (P-agg) can be applied to (UC). We use (UC-agg) to represent the variable aggregation-based formulation of (UC), whose complete formulation is given in the Appendix I in Supplementary Materials.

Based on Theorem~\ref{thm4}, for formulation (UC-agg), not only can the symmetry be broken, but also the continuous relaxation lower bound can be very tight. These advantages may significantly improve the performance of a mixed-integer programming solver.

\section{Application to separable mixed-integer convex optimization problem}\label{sec6}

Another special case of \textrm{(P0)} is the mixed-integer separable convex programming problem
\begin{equation}\label{MISQP}
	\textrm{(SP)}~\left\{
	\begin{aligned}
		\min~& \sum_{i\in \mathcal{I}}\left(f_{i}(x_{i})+c_{i} y_{i}\right) \\
		\textrm{s.t.}~&\sum_{i\in \mathcal{I}} g_{il}(x_i) \leq d_l,~l \in \mathcal{L},\\
		& l_{i} y_{i} \leq x_{i} \leq u_{i} y_{i},~y_{i} \in\{0,1\}, ~i\in \mathcal{I},
	\end{aligned}
	\right.
\end{equation}
where $c_i,l_i,u_i\in\mathbb{R}$, all the functions $f_{i}(\cdot)$ and $g_{il}(\cdot)$ are univariate convex functions for $i\in\mathcal{I}$, and $d_l\in\mathbb{R}$ for $l\in\mathcal{L}$. We remark that several important problems in \citep{Agnetis2009,Frangioni2011} are all special cases of \textrm{(SP)}. Furthermore, if $c_i=0$ and $l_i=0$ for all $i\in\mathcal{I}$, then the problem can be reduced to the continuous separable optimization problem, which has been studied extensively in the literature \citep{Bretthauer2002,Edirisinghe2019,Schoot2022}. As pointed-out in \citep{Agnetis2012}, symmetry arises naturally in \textrm{(SP)} in some real-life applications such as the line cover problem. Based on the symmetric structure, we partition the set $\mathcal{I}$ into $\mathcal{I}=\bigcup_{t=1}^T \mathcal{I}_{t}$, where each $\mathcal{I}_{t}$ represents an equivalent class, and reformulate \textrm{(SP)} as follows:
\begin{equation}\label{MISQPR2}
	\left\{
	\begin{aligned}
		\min ~& \sum_{t\in\mathcal{T}}\sum_{i\in \mathcal{I}_t}\left(z_i+c_t y_{i}\right) \\
		\textrm{s.t.}~&\sum_{t\in\mathcal{T}}\sum_{i\in \mathcal{I}_t} w_{il} \leq d_{l},~l \in \mathcal{L},\\
		&f_t(x_{i})\leq z_i,~l_{t} y_{i} \leq x_{i} \leq u_{t} y_{i},~y_{i} \in\{0,1\},~i\in \mathcal{I}_t,~t\in\mathcal{T},\\
		&g_{tl}(x_i)\leq w_{il},~i\in \mathcal{I}_t,~t\in\mathcal{T},~l \in \mathcal{L}.
	\end{aligned}
	\right.
\end{equation}
For any $i\in \mathcal{I}$, we use $\widetilde{\mathbf{x}}_i:=\{x_i,\{w_{il}\}_{l\in\mathcal{L}},z_i\}$ to represent a vector with entries composed of $x_i$, $\{w_{il}\}_{l\in\mathcal{L}}$ and $z_i$.
For any $t\in\mathcal{T}$, we define the set $\Gamma_t:=\{\mathbf{0}\}$, and define $\Lambda_t$ as follows:
\begin{equation}
	\begin{aligned}
		\Lambda_t =\left\{\widetilde{\mathbf{x}}_i \left|\begin{array}{@{}ll}
			&g_{tl}(x_i)\leq w_{il},~ l \in \mathcal{L}\\
			&l_{t} \leq x_{i} \leq u_{t}\\
			& f_t(x_i)\leq z_i\\
			&\underline{z}_{t}\leq z_{i}\leq \bar{z}_{t}\\
			&\underline{w}_{tl}\leq w_{il}\leq \bar{w}_{tl},~ l \in \mathcal{L}
		\end{array}\right.\right\},
	\end{aligned}
\end{equation}
where $\underline{w}_{tl}=\min_{x_i\in [l_{t},u_{t}]}g_{tl}(x_i)$, $\bar{w}_{tl}=\max_{x_i\in [l_{t},u_{t}]}g_{tl}(x_i)$, $\underline{z}_{t}=\min_{x_i\in [l_{t},u_{t}]}f_{t}(x_i)$ and $\bar{z}_{t}=\max_{x_i\in [l_{t},u_{t}]}f_{t}(x_i)$.
Using the above notations, we can reformulate problem \eqref{MISQPR2} as follows:
\begin{equation}\label{MISQPPR1}
	\left\{
	\begin{aligned}
		\min  & \sum_{t\in\mathcal{T}}\sum_{i\in \mathcal{I}_t}\left(z_i+c_t y_{i}\right) \\
		\textrm{s.t.} &\sum_{t\in\mathcal{T}}\sum_{i\in \mathcal{I}_t} w_{il} \leq d_{l},~l \in \mathcal{L},\\
		&\widetilde{\mathbf{x}}_i:=\{x_i,\{w_{il}\}_{l\in\mathcal{L}},z_i\},~(\widetilde{\mathbf{x}}_i,y_i) \in \Omega_t,~i\in \mathcal{I}_t,~t\in\mathcal{T},
	\end{aligned}
	\right.
\end{equation}
where $\Omega_t := \left\{ (\widetilde{\mathbf{x}}_i,y_i)\left|\widetilde{\mathbf{x}}_i \in y_{i} \otimes \Lambda_{t} + (1-y_{i}) \otimes \Gamma_{t},~ y_{i} \in\{0,1\} \right.\right\}$.
It is easy to check that \eqref{MISQPPR1} is a special case of \textrm{(P0)}. Therefore, the reformulations \textrm{(P-per)} and \textrm{(P-agg)} for \textrm{(P0)} can be applied to \eqref{MISQPPR1}. We use (SP-per) and (SP-agg) to denote the reformulations of applying \textrm{(P-per)} and \textrm{(P-agg)} to the case of \eqref{MISQPPR1}, respectively, whose complete formulations are presented in the Appendix J of Supplementary Materials. The numerical performance of the three reformulations will be shown in the next section.

\section{Computational Experiments}\label{sec7}

In this section, we evaluate the computational performance of the proposed variable aggregation-based perspective reformulation technique. To this end, we conduct experiments on three classes of problems: (i) the unit commitment problem, (ii) the line cover problem, and (iii) general separable mixed-integer convex quadratic programming problems.

For each problem type, we compare multiple formulations to assess the effectiveness of the proposed method. Two key performance metrics are used for comparison:
\begin{itemize}
	\item[$\bullet$] Computational Efficiency - Measured by the total computational time and the number of branch-and-bound nodes explored during the solution process for each formulation. This metric reflects the effectiveness of the proposed reformulation in breaking problem symmetry and improving solver efficiency.
	\item[$\bullet$] Tightness of Continuous Relaxation - Evaluated based on the lower bound obtained from the continuous relaxation of each formulation and its relative gap to the global optimal value, defined as:
	$$\text{relative gap}:=\frac{|\text{OPT}-\text{LB}_{(\cdot)}|}{|\text{OPT}|},$$
	where OPT is the global optimal value, and LB$_{(\cdot)}$ is the lower bound provided by the continuous relaxation of the corresponding formulation.
\end{itemize}

All computational experiments are performed on a personal computer equipped with an Intel(R) Core(TM) Ultra 5 125H 3.60 GHz processor and 32GB RAM. The optimization problems are solved using Gurobi 12.0.0 through its Python interface. To ensure fair and consistent comparisons, the number of threads is fixed at 1 for all experiments, and a time limit of 1200 CPU seconds is imposed for each instance. These settings allow us to consistently compare the performance of different formulations and rigorously test the benefits of the proposed reformulation technique across diverse problem types.

\subsection{Unit commitment problem}
We first evaluate the proposed approach on the unit commitment problem, a fundamental problem in power systems where it is common to encounter multiple identical generating units. For this study, we employ Ostrowski's benchmark test set \citep{Knueven2017}, which is specifically designed to assess the solution efficiency of unit commitment problems with identical units. The test set contains 20 instances, each constructed by replicating some of the generating units originally described in \citep{Kazarlis1996}. Further details regarding the test instance parameters can be found in Section V of \citep{Knueven2017}.

To assess the effectiveness of the proposed variable aggregation-based perspective reformulation (UC-agg), we compare its performance against two formulations: (i) (UC-per): equivalent to the dynamic programming-based formulation proposed in \citep{Bacci2023}, and (ii) the classical 3-bin formulation \citep{Rajan2005}, which is widely regarded for its strong performance.

In our experiments, both the 3-bin and (UC-per) formulations follow the exact implementations described by \citep{Bacci2023}. It is important to emphasize that the proposed (UC-agg) formulation differs from the aggregation-based formulation introduced by \citep{Knueven2017}, as our formulation incorporates perspective reformulation techniques to directly handle the convex quadratic cost functions, rather than relying on piecewise linear approximations.

Each instance is solved using Gurobi with a MIPGap tolerance of $10^{-4}$, consistent with \citep{Bacci2023}. The computational results are presented in Table~\ref{tab1}, reporting the Problem ID (ID), Total number of generating units (Tol), Average number of identical units (Rep), Computational time in seconds (Time), Number of branch-and-bound nodes explored (Nodes). Moreover, a dash ``--'' indicates that the solver failed to solve the instance within the 1200 seconds time limit.

\begin{table}[bpht]
	\caption{Computational results on Ostrowski's instances}\label{tab1}
	\centering
	\scalebox{0.65}{
		\begin{tabular}{ccccccccc}
			\toprule
			\multirow{2}{*}{ID} & \multirow{2}{*}{Tol}  & \multirow{2}{*}{Rep}  &\multicolumn{2}{c}{3-bin}  & \multicolumn{2}{c}{\textrm{(UC-per)}} & \multicolumn{2}{c}{\textrm{(UC-agg)}}  \\
			\cmidrule(r){4-5}  \cmidrule(r){6-7} \cmidrule(r){8-9}
			&       &          & Time      & Nodes     & Time      & Nodes        & Time      & Nodes           \\
			\midrule
			1 & 28& 3.5 & 161.7680 & 27895.0  &-- &--& 0.9930 & 155.0 \\
			2 & 35& 4.4 & 22.8720 & 3818.0  & -- & --& 8.7300 & 1037.0 \\
			3 & 44& 5.5 & 16.3280 & 1240.0  & -- & -- & 4.1920 & 615.0 \\
			4 & 45& 5.6 & 17.9650 & 1882.0  & -- & -- & 6.9180 & 1604.0 \\
			5 & 49& 6.1 & 23.5750 & 1953.0  & -- & -- & 3.6530 & 818.0 \\
			6 & 50& 6.3 & 25.8940 & 1433.0 & -- & -- & 22.4200 & 2235.0 \\
			7 & 51& 6.4 & 35.5750 & 3422.0  & -- & -- & 7.9650 & 871.0 \\
			8 & 51& 6.4 & 28.5230 & 2016.0  & -- & -- & 11.1330 & 3113.0 \\
			9 & 52& 6.5 & 30.2800 & 2510.0  & --& -- & 19.6580 & 9610.0 \\
			10 & 54& 6.8 & 56.8100 & 3289.0  & -- & -- & 14.8550 & 2253.0 \\
			11 & 132& 16.5 & -- & --  & -- & -- & 8.2180 & 1270.0 \\
			12 & 156& 19.5 & 45.4530 &  876.0  & 631.3040 & 1.0& 17.7150 & 1046.0 \\
			13 & 156& 19.5 & 60.9420 & 844.0 & 650.4190 & 2023.0 & 35.8030 & 75.0 \\
			14 & 165& 20.6 & 37.7060 & 303.0  & 314.3900 & 1.0& 1.5860 & 31.0 \\
			15 & 167& 20.9 & 214.0810 & 5896.0  & -- & -- & 26.1850 & 3014.0 \\
			16 & 172& 21.5 & 24.2450 & 158.0  & 631.7160 & 1.0& 20.7230 & 1.0  \\
			17 & 182& 22.8 & 58.1690 & 629.0  & 954.1220 & 163.0& 0.7040 & 1.0  \\
			18 & 182& 22.8 & 163.9320 & 3418.0  & -- & -- & 8.9430 & 1322.0 \\
			19 & 183& 22.9 & 152.0440 & 3057.0  & -- &-- & 8.4260 & 1783.0 \\
			20 & 187& 23.4 & 119.3980 & 2251.0  & 674.6270 & 1.0& 0.9210 & 1.0\\
			\bottomrule
	\end{tabular}}
\end{table}

From Table~\ref{tab1}, it is evident that the proposed (UC-agg) formulation substantially outperforms the alternatives. Specifically: All instances are solved within 36 seconds using (UC-agg). In comparison, the 3-bin formulation requires over 100 seconds for five instances, with one instance unsolved within the time limit. Additionally, for the (UC-per) formulation, only six instances can be solved within 1200 seconds. This is attributed to its larger number of binary variables compared to the other formulations. These results highlight the effectiveness of (UC-agg) in both efficiently breaking symmetry and providing tight continuous relaxations.

To further evaluate relaxation tightness, we compare the lower bounds of the continuous relaxations, the optimal values, and their relative gaps for each formulation in Table~\ref{tab2}. It is observed that (UC-agg) and (UC-per) achieve essentially identical lower bounds (within numerical accuracy), both of which are significantly tighter than the 3-bin formulation. These numerical results corroborate our theoretical findings established in Theorem~\ref{thm4}, confirming the tightness of the proposed variable aggregation-based perspective reformulation.

\begin{table}[t]
	\caption{Different continuous relaxation-based lower bounds and relative gaps for Ostrowski's instances}\label{tab2}
	\centering%
	\scalebox{0.65}{
		\begin{tabular}{ccccc}
			\toprule
			ID      &3-bin &\textrm{(UC-per)}  &\textrm{(UC-agg)}  &Opt.val  \\
			\midrule
			1 &$3.8390\times 10^{6}(1.9559\times10^{-3})$ &$3.8437\times 10^{6}(7.1776\times10^{-4})$ &$3.8437\times 10^{6}(7.1777\times10^{-4})$ &$3.8465\times 10^{6}$ \\
			2 &$4.8265\times 10^{6}(1.9291\times10^{-3})$ &$4.8311\times 10^{6}(7.7341\times10^{-4})$ &$4.8311\times 10^{6}(7.7340\times10^{-4})$ &$4.8348\times 10^{6}$ \\
			3 &$5.1345\times 10^{6}(1.7396\times10^{-3})$ &$5.1424\times 10^{6}(2.1523\times10^{-4})$ &$5.1424\times 10^{6}(2.1523\times10^{-4})$ &$5.1435\times 10^{6}$ \\
			4 &$4.8028\times 10^{6}(1.4567\times10^{-3})$ &$4.8081\times 10^{6}(3.6100\times10^{-4})$ &$4.8081\times 10^{6}(3.6100\times10^{-4})$ &$4.8098\times 10^{6}$ \\
			5 &$5.4016\times 10^{6}(2.3316\times10^{-3})$ &$5.4137\times 10^{6}(1.0374\times10^{-4})$ &$5.4137\times 10^{6}(1.0376\times10^{-4})$ &$5.4142\times 10^{6}$ \\
			6 &$4.4132\times 10^{6}(4.2960\times10^{-3})$ &$4.4310\times 10^{6}(2.7486\times10^{-4})$ &$4.4310\times 10^{6}(2.7487\times10^{-4})$ &$4.4322\times 10^{6}$ \\
			7 &$5.8485\times 10^{6}(1.2884\times10^{-3})$ &$5.8546\times 10^{6}(2.3323\times10^{-4})$ &$5.8546\times 10^{6}(2.3326\times10^{-4})$ &$5.8560\times 10^{6}$ \\
			7 &$5.1801\times 10^{6}(2.6380\times10^{-3})$ &$5.1925\times 10^{6}(2.3667\times10^{-4})$ &$5.1925\times 10^{6}(2.3667\times10^{-4})$ &$5.1938\times 10^{6}$ \\
			9 &$5.6293\times 10^{6}(2.5132\times10^{-3})$ &$5.6425\times 10^{6}(1.6992\times10^{-4})$ &$5.6425\times 10^{6}(1.6991\times10^{-4})$ &$5.6435\times 10^{6}$ \\
			10 &$5.0743\times 10^{6}(4.3465\times10^{-3})$ &$5.0952\times 10^{6}(2.5269\times10^{-4})$ &$5.0952\times 10^{6}(2.5269\times10^{-4})$ &$5.0965\times 10^{6}$ \\
			11 &$1.5865\times 10^{7}(1.3122\times10^{-3})$ &$1.5883\times 10^{7}(2.1014\times10^{-4})$ &$1.5883\times 10^{7}(2.0992\times10^{-4})$ &$1.5886\times 10^{7}$ \\
			12 &$1.7253\times 10^{7}(1.3102\times10^{-3})$ &$1.7274\times 10^{7}(1.0274\times10^{-4})$ &$1.7274\times 10^{7}(1.0278\times10^{-4})$ &$1.7276\times 10^{7}$ \\
			13 &$1.6927\times 10^{7}(1.7760\times10^{-3})$ &$1.6954\times 10^{7}(1.4674\times10^{-4})$ &$1.6954\times 10^{7}(1.4671\times10^{-4})$ &$1.6957\times 10^{7} $ \\
			14 &$2.0200\times 10^{7}(1.6584\times10^{-3})$ &$2.0233\times 10^{7}(3.0053\times10^{-5})$ &$2.0233\times 10^{7}(3.0057\times10^{-5})$ &$2.0234\times 10^{7}$ \\
			15 &$1.7375\times 10^{7}(2.7703\times10^{-3})$ &$1.7421\times 10^{7}(1.3432\times10^{-4})$ &$1.7421\times 10^{7}(1.3433\times10^{-4})$ &$1.7424\times 10^{7} $ \\
			16 &$1.9552\times 10^{7}(1.1183\times10^{-3})$ &$1.9574\times 10^{7}(8.4627\times10^{-5})$ &$1.9574\times 10^{7}(8.4628\times10^{-5})$ &$1.9575\times 10^{7}$ \\
			17 &$1.9712\times 10^{7}(2.2026\times10^{-3})$ &$1.9755\times 10^{7}(4.4694\times10^{-5})$ &$1.9755\times 10^{7}(4.4712\times10^{-5})$ &$1.9756\times 10^{7} $ \\
			18 &$1.9623\times 10^{7}(2.0091\times10^{-3})$ &$1.9661\times 10^{7}(1.0774\times10^{-4})$ &$1.9661\times 10^{7}(1.0774\times10^{-4})$ &$1.9663\times 10^{7} $ \\
			19 &$2.0159\times 10^{7}(1.5809\times10^{-3})$ &$2.0188\times 10^{7}(1.4108\times10^{-4})$ &$2.0188\times 10^{7}(1.4108\times10^{-4})$ &$2.0191\times 10^{7} $ \\
			20 &$1.9742\times 10^{7}(1.9517\times10^{-3})$ &$1.9779\times 10^{7}(7.3945\times10^{-5})$ &$1.9779\times 10^{7}(7.3954\times10^{-5})$ &$1.9781\times 10^{7} $ \\
			\bottomrule
	\end{tabular}}
	\footnotesize{
		\begin{tablenotes}%
		\item [1] Note: The numbers outside and in parentheses are lower bounds and relative gaps, respectively.
	\end{tablenotes} }
\end{table}

\subsection{Line cover problem}\label{sec72}

Next, we examine the line cover problem, introduced by \citep{Agnetis2009} and subsequently studied in \citep{Frangioni2011,Agnetis2012}. The problem involves selecting a subset of sensors and determining their locations to cover a line segment of length $d$ (e.g., a road, river, or pipeline). If a sensor $i\in\mathcal{I}$ covers a portion of the line of length  $x_i$, the associated cost is given by $a_i x_i^2+c_i$, where $c_i$ represents a setup cost and $a_i x_i^2$ is a quadratic operational cost. The objective is to select sensors and allocate coverage lengths to minimize the total cost.
A mixed-integer quadratic programming formulation is employed as
\begin{equation}\label{lcProb}
	\textrm{(LC)}~\left\{
	\begin{aligned}
		\min & \sum_{i \in \mathcal{I}}\left(a_i x_i^2+c_i y_i\right) \\
		\mbox{s.t.}~  &\sum_{i \in \mathcal{I}} x_i = d,\\
		& 0 \leq x_{i} \leq u_i y_i,~y_i \in\{0,1\},~ i \in \mathcal{I},
	\end{aligned}
	\right.
\end{equation}
where $y_i$ is a binary variable indicating if the sensor is selected, and $u_i>0$ denotes the maximum radius that can be covered by sensor $i$.

As (LC) constitutes a special case of separable convex quadratic optimization, both the perspective reformulation (SP-per) and the proposed variable aggregation-based reformulation (SP-agg) are applicable. We denote the resulting formulations as (LC-per) and (LC-agg), respectively.

Computational test instances are generated following the procedures described in \citep{Frangioni2011}. Specifically: (i) each $a_i$ is uniformly sampled from $[n,C_{\max}]$, where $n$ is the total number of sensors and $C_{\max}$ is randomly selected from $\{10n,20n,30n\}$, (ii) each $c_i$ is a random integer in $\{1,\ldots,n\}$, and (iii) all upper bounds $u_i$ and the line length $d$ are set to 1. In our experiments, $n=18000$ sensors are considered. To introduce symmetry, parameters of the first $T$ sensors are generated and then replicated $N$ times, with $T=n/N$. We test five settings of $N\in\{10,15,20,30,50\}$, generating five random instances per setting.

All instances are solved using Gurobi, with the MIPGap tolerance set to $10^{-6}$ (as continuous relaxation gaps for (LC-per) and (LC-agg) are already below $10^{-4}$). Additionally, symmetry-breaking inequalities are introduced for the binary variables associated with each group of identical sensors, enforcing $y_1 \geq y_2 \geq \ldots \geq y_{N_t}$ for each set $\mathcal{I}_t:=\{1,\ldots,N_t\}$. These enhancements are applied to both (LC) and (LC-per), with results compared to (LC-agg). The computational outcomes are summarized in Table~\ref{tab3}, where each row reports the average computational time and average number of branch-and-bound nodes over five randomly generated instances.

From Table~\ref{tab3}, it is evident that both (LC-per) and (LC-agg) are considerably more efficient than (LC), confirming the effectiveness of perspective reformulation in reducing the relaxation gap. Moreover, (LC-agg) consistently outperforms (LC-per), demonstrating the additional benefit of symmetry breaking through aggregation.

To assess relaxation tightness, we record the average lower bound, average optimal value, and average relative gaps for each formulation in Table~\ref{tab4}. The results indicate that (LC-agg) is as tight as (LC-per), and both produce significantly tighter lower bounds compared to (LC), further validating the advantage of the proposed approach.

\begin{table}[h]
	\caption{Computational results for randomly generated test instances of line cover problem}\label{tab3}
	\centering
	\scalebox{0.7}{
		\begin{tabular}{ccccccc}
			\toprule
			\multirow{2}{*}{$(T,N)$} & \multicolumn{2}{c}{\textrm{(LC)}}  & \multicolumn{2}{c}{\textrm{(LC-per)}}   & \multicolumn{2}{c}{\textrm{(LC-agg)}} \\
			\cmidrule(r){2-3}  \cmidrule(r){4-5} \cmidrule(r){6-7}
			& Time  & Nodes  & Time  & Nodes   & Time  & Nodes   \\
			\midrule
			(1800,10)     & 48.6778    & 2.6       & 6.3282   & 2.6       & 0.6100  & 1.4   \\
			(1200,15)    & 48.0874    & 4.6        & 4.8298   & 5.2        & 0.2598  & 2.6    \\
			(900,20)     & 53.5290     & 8.6       & 4.9538   & 6.2     & 0.2112  & 2.2    \\
			(600,30)     & 68.7288     & 4.2         & 5.6392   & 9.8      & 0.2182  & 1.8  \\
			(360,50)   & 92.3584     & 6.8        & 6.2186   & 18.4       & 0.1606  & 3.4  \\
			\bottomrule
	\end{tabular}}
\end{table}

\begin{table}[h]
	\caption{Different average lower bounds and average relative gaps for instances of line cover problem}\label{tab4}
	\centering
	\scalebox{0.7}{
		\begin{tabular}{ccccccc}
			\toprule
			$(T,N)$   &\textrm{(LC)}               &\textrm{(LC-per)}            &\textrm{(LC-agg)}               &Opt.val  \\
			\midrule
			(1800,10) &$472.20 (8.59\times10^{-1})$   &$3389.77 (1.46\times10^{-4})$   &$3389.77(1.46\times10^{-4})$   &$3390.33$     \\
			(1200,15) &$482.22(8.73\times10^{-1})$    &$3859.27(1.52\times10^{-4})$    &$3859.27(1.52\times10^{-4})$   &$3859.97$     \\
			(900,20)  &$455.73(8.78\times10^{-1})$    &$3949.70(3.78\times10^{-5})$    &$3949.70(3.78\times10^{-5})$   &$3949.87$     \\
			(600,30)  &$439.07(8.59\times10^{-1})$    &$3423.62(7.31\times10^{-5})$    &$3423.62(7.31\times10^{-5})$  &$3424.01$     \\
			(360,50)  &$502.20(9.06\times10^{-1})$    &$5699.58(2.59\times10^{-4})$    &$5699.58(2.59\times10^{-4})$   &$5701.62$     \\
			\bottomrule
	\end{tabular}}
\end{table}

\subsection{Separable mixed-integer convex quadratic optimization problem}


We further evaluate the proposed methodology on a separable mixed-integer convex quadratic optimization problem, a special case of (SP). The problem is formulated as follows:
\begin{equation}\label{MIQP}
	\textrm{(SQP)}~\left\{
	\begin{aligned}
		\min  & \sum_{i\in \mathcal{I}}\left(a_i x_i^2+b_i x_i+c_i y_i\right) \\
		\textrm{s.t.} &\sum_{i\in\mathcal{I}}(a_{li} x_i^2+b_{li} x_i)\leq d_l,~l=1,\ldots,m-1,\\
		&\sum_{i\in\mathcal{I}}x_i=d_m,\\
		& l_{i} y_{i} \leq x_{i} \leq u_{i} y_{i},~y_{i} \in\{0,1\}, ~i\in \mathcal{I},
	\end{aligned}
	\right.
\end{equation}
where $a_i,b_i\geq0$, $l_i,u_i,c_i\in\mathbb{R}$ for $i \in \mathcal{I}$, $d_l\in \mathbb{R}$ for $l=1,\ldots,m$, and $a_{li},b_{li}\geq 0$ for $i\in\mathcal{I}$ and $l=1,\ldots,m$. This formulation consists of $m-1$ separable convex quadratic constraints and one linear equality constraint, which is added to prevent from generating trivial test instances.

Both reformulations introduced in Section 6, including (SP-per) and (SP-agg), are applicable to (SQP), denoted here as (SQP-per) and (SQP-agg), respectively. For the baseline (SQP) and (SQP-per), symmetry breaking inequalities of the form $y_1 \geq y_2 \geq \ldots \geq y_{N_t}$ were incorporated for each set  $\mathcal{I}_t:=\{1,\ldots,N_t\}$ to enhance computational performance.


Test instances were generated following the distribution described in Section 6.2 of \citep{Luo2023}, modified to include binary variables and multiple separable convex quadratic constraints with (i) for each $i$, parameters $a_i$ and $c_i$ were sampled from $[0,1]$, and $b_i$ from $[2,5]$, (ii) variable bounds were set to $[l_i,u_i]=[-1,1]$, (iii) a feasible solution $\hat{\mathbf{x}}$ was randomly generated, with each $\hat{x}_i$ drawn from $[-1,1]$, (iv)  for each $l\in\{1,\ldots,m-1\}$, parameters $a_{li}$ and $b_{li}$ were sampled from $[0,2]$ and $[0,5]$, respectively, with the corresponding $d_l$ set to ensure feasibility via $d_l=\sum_{i\in\mathcal{I}}(a_{li} \hat{x}_i^2+b_{li}\hat{x}_i)$, and (v) $d_m$ was set to $\sum_{i\in\mathcal{I}}\hat{x}_i$.

As in Section~\ref{sec72}, the total number of variables was set to $n=18000$, with $m=4$, $N\in\{10,15,20,30,50\}$, and $T=n/N$. The parameters $(a_i,b_i)$ were randomly generated for the first $T$ elements, then replicated $N$ times to introduce symmetric structure. For each setting, five random instances were generated and solved using Gurobi, with solver settings consistent with Section~\ref{sec72}.

Table~\ref{tab5} summarizes computational time and enumerated nodes. It clearly demonstrates that (SQP-agg) significantly outperforms the other two formulations, achieving substantial reductions in both computational time and the number of branch-and-bound nodes. These results further confirm the effectiveness of the proposed reformulation in handling symmetry and improving solution efficiency.

Table~\ref{tab6} reports the average lower bound and relative gap for each formulation. It shows that the continuous relaxations of (SQP-agg) and (SQP-per) achieve equivalent lower bounds (within numerical tolerance), both substantially tighter than those obtained from the original (SQP) formulation. This result reinforces the theoretical findings from Section 6.

Across all three classes of problems, the proposed variable aggregation-based perspective reformulations consistently outperformed classical perspective reformulations without variable aggregation, especially on large symmetric instances.

\begin{table}[h]
	\caption{Computational results for mixed-integer separable quadratic optimization problems}\label{tab5}
	\centering
	\scalebox{0.7}{
		\begin{tabular}{ccccccccc}
			\toprule
			\multirow{2}{*}{$(T,N)$}  & \multicolumn{2}{c}{\textrm{(SQP)}}  & \multicolumn{2}{c}{\textrm{(SQP-per)}} & \multicolumn{2}{c}{\textrm{(SQP-agg)}} \\
			\cmidrule(r){2-3}  \cmidrule(r){4-5} \cmidrule(r){6-7}
			& Time  & Nodes    & Time  & Nodes   & Time & Nodes\\
			\midrule
			$(1800,10)$  & 124.8916  & 9.0    & 17.8260  & 3.0    & 1.2414  & 1.0   \\
			$(1200,15)$ & 120.0990 & 13.2   & 21.6206  & 99.6   & 0.6068  & 1.0   \\
			$(900,20)$  & 153.0214  & 43.0     & 30.0326  & 17.8    & 0.5786  & 1.0  \\
			$(600,30)$  & 174.5346  & 94.0     & 56.5870  & 49.8  & 0.3554  & 1.0   \\
			$(360,50)$  & 313.3260  & 93.0     & 65.3550  & 31.6   & 0.4192  & 5.8   \\
			\bottomrule
	\end{tabular}}
\end{table}

\begin{table}[h]
	\caption{Different average lower bounds and average relative gaps for mixed-integer separable quadratic optimization problems}\label{tab6}
	\centering
	\scalebox{0.7}{
		\begin{tabular}{cccccc}
			\toprule
			$(T,N)$   &\textrm{(SQP)}  &\textrm{(SQP-per)} &\textrm{(SQP-agg)} &Opt.val    \\ \midrule
			(1800,10)  &$-2717.94(1.81\times10^{-1})$         & $-2341.67(2.12\times10^{-7})$    & $-2341.67(2.53\times10^{-7})$    & $-2341.67$    \\
			(1200,15)  &$-2548.69(2.24\times10^{-1})$         & $-2145.47(2.54\times10^{-7})$    & $-2145.47(2.21\times10^{-7})$    & $-2145.47$    \\
			(900,20)   &$-2256.39(3.20\times10^{-1})$         & $-1853.64(3.48\times10^{-7})$    & $-1853.64(1.17\times10^{-7})$    & $-1853.64$    \\
			(600,30)   &$-3050.22(8.11\times10^{-1})$         & $-2645.61(5.03\times10^{-7})$    & $-2645.61(1.12\times10^{-7})$    & $-2645.61$    \\
			(360,50)   &$-2863.20(6.52\times10^{-1})$         & $-2431.03(1.80\times10^{-6})$    & $-2431.03(2.00\times10^{-6})$    & $-2431.03$    \\
			\bottomrule
	\end{tabular}}
\end{table}

\section{Conclusion}\label{sec8}
This paper proposes a novel variable aggregation-based perspective reformulation for addressing symmetry in mixed-integer convex optimization problems. The key innovation is the introduction of perspective functions to precisely characterize the feasible region of aggregated variables. By integrating perspective reformulation techniques with variable aggregation, we develop a powerful new formulation that not only handles symmetry but also tightens the continuous relaxation, yielding high-quality lower bounds.

Our main theoretical contribution is the exact convex hull characterization for the feasible region of each set of aggregated variables under the proposed formulation. We prove that the continuous relaxation of the aggregated formulation recovers the convex hull, ensuring tightness and enhancing the theoretical foundation for variable aggregation-based reformulations. Comparative analysis of lower bounds across different reformulations further confirms the superior performance achieved by our method.

Extensive computational experiments validate the practical effectiveness of the proposed reformulation technique. The results demonstrate significant improvements in both the quality of continuous relaxation bounds and the ability to break problem symmetry. These combined benefits lead to notable performance gains in branch-and-bound algorithms, as evidenced by substantial improvements in solver efficiency when using Gurobi.

Overall, this work advances the state-of-the-art in mixed-integer convex optimization by providing a theoretically sound and computationally effective approach for tackling symmetry. The proposed reformulation has the potential to improve solution methodologies for a wide range of real-world optimization problems characterized by symmetric structures.

\section*{Acknowledgements}
This research was supported by National Natural Science Foundation of China Grant No. 12171151, Grant No. T2293774, and by the Fundamental Research Funds for the Central Universities E2ET0808X2.
%
%


\bibliographystyle{elsarticle-harv} 
\bibliography{refs.bib}



%
%
%
\end{document}